\theoremstyle{plain}
\newtheorem{theorem}{Theorem}
\newtheorem{proposition}{Proposition}[section]
\newtheorem{lemma}[proposition]{Lemma}
\newtheorem{remark}[proposition]{Remark}
\numberwithin{equation}{section}
\newcommand{\R}{\mathbb{R}}					% real numbers
\newcommand{\C}{\mathbb{C}}					% complex numbers
\newcommand{\N}{\mathbb{N}}					% positive integers
\def\norm#1#2{\|#1\|_{#2}}
\def\refer#1{~\ref{#1}}
\def\refeq#1{~(\ref{#1})}
\def\ccite#1{~\cite{#1}}
\def\suite#1#2#3{(#1_{#2})_{#2\in {#3}}}
\def\inte#1{
\displaystyle\mathop{#1\kern0pt}^\circ }
\def\sumetage#1#2{\sum_{\substack{{#1}\\{#2}}}}
\def\supetage#1#2{\sup_{\substack{{#1}\\{#2}}}}
\let\d=\delta
\let\lam=\lambda
\let\D=\Delta
\let\S=\Sigma
\let\wt=\widetilde
\let\wh=\widehat
\def\cA{{\mathcal A}}
\def\cD{{\mathcal D}}
\def\cF{{\mathcal F}}
\def\cS{{\mathcal S}}
\def\cW{{\mathcal W}}
\def\cX{{\mathcal X}}
\def\S{{\mathop{\mathbb  S\kern 0pt}\nolimits}}
\def\virgp{\raise 2pt\hbox{,}}
\def\cdotpv{\raise 2pt\hbox{;}}
\def\eqdefa{\buildrel\hbox{\footnotesize def}\over =}
\def\C{\mathop{\mathbb C\kern 0pt}\nolimits}
\def\EE{\mathop{{\mathbb E \kern 0pt}}\nolimits}
\def\K{\mathop{\mathbb K\kern 0pt}\nolimits}
\def\N{\mathop{\mathbb N\kern 0pt}\nolimits}
\def\Q{\mathop{\mathbb Q\kern 0pt}\nolimits}
\def\R{{\mathop{\mathbb R\kern 0pt}\nolimits}}
\def\SS{\mathop{\mathbb S\kern 0pt}\nolimits}
\def\ZZ{\mathop{\mathbb Z\kern 0pt}\nolimits}
\def\TT{\mathop{\mathbb T\kern 0pt}\nolimits}
\def\P{\mathop{\mathbb P\kern 0pt}\nolimits}
\def \H{{\mathop {\mathbb H\kern 0pt}\nolimits}}
\newcommand{\ds}{\displaystyle}
\newcommand{\Z}{{\ZZ}}
\newcommand{\beq}{\begin{equation}}
\newcommand{\eeq}{\end{equation}}
\newcommand{\ben}{\begin{eqnarray}}
\newcommand{\een}{\end{eqnarray}}
\newcommand{\beno}{\begin{eqnarray*}}
\newcommand{\eeno}{\end{eqnarray*}}
\newcommand{\bqs}{\begin{equation*}}
\newcommand{\eqs}{\end{equation*}}
\newcommand{\andf}{\quad\hbox{and}\quad}
\newcommand{\with}{\quad\hbox{with}\quad}
\def \cFH {\cF_\H}
\def\equivH#1 {\buildrel\hbox{\tiny {$#1$}}\over \equiv}
\def\simH#1 {\buildrel\hbox{\footnotesize {$#1$}}\over \sim}
\title[Local dispersive and Strichartz estimates for the Schr\"odinger operator]
{Local dispersive  and Strichartz  estimates for the Schr\"odinger operator on the Heisenberg group}
\date{\today}
\author[H. Bahouri]{Hajer Bahouri}
\address[H. Bahouri]
{CNRS  \&  Sorbonne Universit\'e  \\
 Laboratoire Jacques-Louis Lions (LJLL) UMR  7598 \\
4, Place Jussieu\\
75005 Paris, France.}
\email{hajer.bahouri@ljll.math.upmc.fr}
\author[I. Gallagher]{Isabelle Gallagher}
\address[I. Gallagher]%
{DMA, \'Ecole normale sup\'erieure, CNRS, PSL Research University, 75005 Paris 
 \\
and UFR de math\'ematiques, Universit\'e de Paris, 75013 Paris, France.}
\email{gallagher@math.ens.fr}
\begin{document}
\setstcolor{red}

\begin{abstract} 
It was proved by H. Bahouri, P. G\'erard and  C.-J. Xu in\ccite{bgx} that the Schr\"odinger equation on  the Heisenberg group~$\H^d$,  involving the sublaplacian,  is an example of a  totally non-dispersive evolution equation: for this reason  global dispersive   estimates cannot hold. This paper aims at establishing  local dispersive   estimates on~$\H^d$  for the linear Schr\"odinger   equation, by a refined study of  the Schr\"odinger kernel~$ S_t$  on~$\H^d$.  The sharpness of these estimates  is discussed through several examples.
Our approach,     based on  the explicit formula  of   the heat kernel  on~$\H^d$ derived  by  B. Gaveau in\ccite{B. Gaveau},  is achieved by   combining  complex analysis and   Fourier-Heisenberg  tools. As a by-product of our results, we  establish local Strichartz  estimates    and prove  that the kernel~$ S_t$ concentrates  on quantized horizontal hyperplanes of~$\H^d$. 
\end{abstract}

\maketitle

\setcounter{tocdepth}{1}
 \tableofcontents

\noindent {\sl Keywords:}  Heisenberg group, Schr\"odinger equation, dispersive   estimates, 
Strichartz   estimates.

\vskip 0.2cm

\noindent {\sl AMS Subject Classification (2000):} 43A30, 43A80.
\section{Introduction}\label {intro}
\setcounter{equation}{0}
\subsection{Setting of the problem}\label {introst} It is well-known  that the solution to the free Schr\"odinger equation on~$\R^{n}$ 
$$
(S)\qquad \left\{
\begin{array}{c}
i\partial_t u -\D u = 0\\
u_{|t=0} = u_0
\end{array}
\right. $$
can be explicitly written with a convolution kernel  for $t \neq 0$   
\beq
\label {kernelSR}  u(t,\cdot)=  u_0 \star   \frac {{\rm e}^{-i \frac {|\cdot|^2} {4t} }} {(-4 \pi i t)^\frac {n} 2} \, \cdotp \eeq
 The proof of this explicit representation stems by a combination of Fourier and complex analysis arguments, from the expression   of the heat kernel on~$\R^{n}$. More precisely, taking the partial Fourier transform of~$(S)$ with respect to the variable $x$  
and   integrating in time the resulting ODE, we get
$$
 {\wh u} (t,\xi) =  {\rm e}^{i t |\xi |^2 }{\wh u_0} (\xi)\, ,
 $$ 
 where for any function~$g \in L^1(\R^n)$ we have defined
$$
\wh g(\xi) \eqdefa  \cF (g)(\xi) \eqdefa   \int_{\R^n} {\rm e}^{-i\langle x,\xi\rangle}g(x) \, dx \, .
$$
 The heart of the matter to prove~(\ref{kernelSR}) then consists  in computing   in the sense of distributions  the inverse Fourier transform of the complex Gaussian 
\beq
\label {kernelformula}  (\cF^{-1}{\rm e}^{i t |\cdot |^2 }) (x)= \frac {{\rm
e}^{-i \frac {|x|^2} {4t} }} {(-4 \pi i t)^\frac {n} 2} \, \cdotp \eeq
The proof of Formula\refeq{kernelformula}  is    based on two observations: first,  that for any~$x$ in~$\R^n$, the  two  maps 
\[
z \in\C \longmapsto H_1(z)\eqdefa \frac1{(2\pi)^n} \int_{\R^n} {\rm e}^{i\langle x,\xi\rangle}{\rm e}^{-z|\xi|^2}\,d\xi \quad\hbox{and}\quad
z \in\C\longmapsto H_2(z)\eqdefa \frac  1 {\big(4 \pi z \big)^{\frac n 2}} {\rm e}^{-\frac {|x|^2} {4z}}
\]
are  holomorphic  on the domain~$D$ of complex numbers with positive real part.  Accordingly   with  the expression of the heat kernel, 
 these two functions coincide on the intersection of the real line with~$D$, and thus they    coincide on the whole domain~$D$. Second, if~$\suite z p \N$ denotes a
sequence of elements of~$D$ which converges to~$-it$ for $t\not =0$,  the use of   the Lebesgue dominated convergence  theorem ensures that $H_1(z_ p )$ and $H_2(z_ p )$ converge in~$\cS'(\R^n)$,  as~$p$ tends to infinity, which   achieves the proof of~(\ref{kernelformula}).

 \medskip
 Formula~(\ref{kernelSR})   implies by Young's inequality the following dispersive estimate:
\beq
\label {dispSR}
\forall t \neq 0 \, , \quad \|u(t,\cdot)\|_{L^\infty(\R^{n})} \leq \frac 1 {(4\pi|t|)^{\frac n 2}} \|u_0\|_{L^1(\R^{n})}\,\cdot
\eeq
Such estimate plays a key role  in the study  of  semilinear and quasilinear equations  which appear in numerous physical applications.  Combined with an abstract functional analysis argument known as the $TT^*$-argument, it  yields a range of inequalities involving space-time Lebesgue norms, known as  Strichartz estimates\footnote{For further details, one can consult the papers of Ginibre-Velo~\cite{ginibrevelo},   Keel-Tao\ccite{keeltao} and Strichartz\ccite{strichartz}.}. When $u_0$ is for instance in~$L^2(\R^{n})$,  the above dispersive estimate\refeq{dispSR}  gives rise to the following Strichartz estimate  for the solution to the free Schr\"odinger equation
\beq
\label {dispSTR}
\|u\|_{L^q(\R; L^p(\R^{n}))} \leq C(p,q)   \|u_0\|_{L^2(\R^{n})}\,,
\eeq
where $(p,q)$  satisfies the scaling admissibility condition
\beq
\label {admissibR}
\frac 2 q + \frac n p = \frac n 2 \with q \geq 2 \andf   (n,q,p) \neq (2,2, \infty) \, .
\eeq
The interest for this issue has soared in the last decades.   We refer for instance to the monographs\ccite{BCD1, tao} and the references therein for an overview on this topic in the euclidean framework.   

\medskip In the present work, we aim at investigating  this phenomenon for the Schr\"odinger equation  on the Heisenberg group~$\H^d$   involving the sublaplacian.  Recall that in\ccite{bgx}, the first author along with P. G\'erard and C.-J. Xu  proved that no dispersion  occurs for this equation, and in particular  exhibited an example for which the Schr\"odinger operator   on~$\H^d$  behaves as  a transport equation with respect to one direction, known as the vertical direction.    
More precisely   they established the  following result   which shows that
a global dispersive estimate  of the type\refeq{dispSR} cannot be expected on~$\H^d$. We refer to the coming paragraph for the notation. 
\begin{proposition} [\cite{bgx}]  
\label {nodispersionS}
{\sl
There exists a function~$u_0$ in the Schwartz class~$\cS(\H^{d})$ such that the solution to  the free Schr\"odinger equation on~$\H^d$   satisfies
\beq \label {cex1}
\forall t \in \R \, , \quad
 \forall (Y,s) \in \H^d \, , \quad u(t,Y,s) = u_0(Y,s+4td)\, .
\eeq
}
\end{proposition}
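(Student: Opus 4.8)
The plan is to reduce the statement, via the partial Fourier transform in the central variable~$s$, to the behaviour of the Schr\"odinger flow on a single eigenmode of a harmonic oscillator, and then to exploit the fact that the relevant eigenvalue is a function of~$|\lambda|$ rather than of~$\lambda$. Write $\wt u(Y,\lambda)=\int_\R e^{-is\lambda}u(Y,s)\,ds$; with the normalisations of the next paragraph, $\partial_s$ becomes multiplication by $i\lambda$ while $\Delta_{\H^d}$ is carried to the one-parameter family of twisted Laplacians
\[
\Delta_\lambda=\Delta_Y+4i\lambda\sum_{j=1}^d\bigl(y_j\partial_{x_j}-x_j\partial_{y_j}\bigr)-4\lambda^2|Y|^2,\qquad Y=(x,y)\in\R^{2d},
\]
acting only in the horizontal variable, and the flow commutes with the Fourier transform in~$s$, acting on each $\lambda$-fibre by $e^{-it\Delta_\lambda}$. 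A short computation shows that the cross-terms $y_j\partial_{x_j}-x_j\partial_{y_j}$ annihilate every radial function and that the Gaussian $G_\lambda(Y):=e^{-|\lambda|\,|Y|^2}$ satisfies $\Delta_\lambda G_\lambda=-4d|\lambda|\,G_\lambda$ --- in fact it is the ground state of~$\Delta_\lambda$.

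With this in hand I would fix $\chi\in C_c^\infty\bigl((0,\infty)\bigr)$ with $\chi\not\equiv0$ and set
\[
\wt u_0(Y,\lambda):=\chi(\lambda)\,e^{-\lambda|Y|^2},\qquad\text{that is}\qquad u_0(Y,s)=\frac1{2\pi}\int_\R e^{is\lambda}\,\chi(\lambda)\,e^{-\lambda|Y|^2}\,d\lambda.
\]
That $u_0\in\cS(\H^d)$ is elementary: since $\supp\chi$ is a compact subset of $(0,\infty)$, every $Y$-derivative of $\chi(\lambda)e^{-\lambda|Y|^2}$ is a polynomial in $(Y,\lambda)$ times $e^{-\lambda|Y|^2}$, hence bounded by $C_N(1+|Y|)^{-N}$ uniformly in $\lambda\in\supp\chi$, and repeated integration by parts in~$\lambda$ (with no boundary terms) produces the decay in~$s$. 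Because $|\lambda|=\lambda$ on $\supp\chi$, the slice $\wt u_0(\cdot,\lambda)$ is an eigenfunction of $\Delta_\lambda$ for the eigenvalue $-4d\lambda$, so that
\[
\wt u(t,Y,\lambda)=e^{-it\Delta_\lambda}\wt u_0(Y,\lambda)=e^{\,4itd\lambda}\,\wt u_0(Y,\lambda),
\]
and inverting the Fourier transform in~$\lambda$ yields exactly $u(t,Y,s)=u_0(Y,s+4td)$, which is\refeq{cex1}. (Equivalently, one may set $v(t):=u_0(\cdot,\cdot+4td)$, note that $v(t)\in\cS(\H^d)$ for every~$t$, and check directly --- using that $\Delta_{\H^d}$ commutes with translations in~$s$ together with the eigenrelation above --- that $i\partial_t v-\Delta_{\H^d}v=0$; uniqueness of the solution, which follows from the essential self-adjointness of $\Delta_{\H^d}$ on $L^2(\H^d)$, then forces $u=v$.)

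The only genuinely non-formal ingredient is the spectral identity $\Delta_\lambda G_\lambda=-4d|\lambda|\,G_\lambda$ with the sharp constant: once the explicit form of $\Delta_\lambda$ has been written out it is just the action of a rescaled harmonic oscillator on its Gaussian ground state, and it is there that both the constant~$4d$ of\refeq{cex1} and the sign of the eigenvalue --- hence the necessity of taking $\supp\chi\subset(0,\infty)$ --- are pinned down. The point is precisely that this eigenvalue depends on~$|\lambda|$ and not on~$\lambda$: this is what makes the flow behave as a genuine transport in the vertical direction on such a mode, and thereby rules out any global dispersive estimate of the form\refeq{dispSR}.
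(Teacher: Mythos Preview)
Your argument is correct. The paper does not prove Proposition~\ref{nodispersionS} itself---it is quoted from~\cite{bgx}---but Remark~\ref{mainth1rkcex} records the explicit counterexample
\[
u(t,Y,s)=\int_\R e^{i(s+4dt)\lambda}\,e^{-\lambda|Y|^2}\,g(\lambda)\,\lambda^d\,d\lambda,\qquad g\in\cD(]0,\infty[),
\]
which is exactly your construction (your~$\chi$ plays the role of~$g(\lambda)\lambda^d$). Your eigenvalue computation $\Delta_\lambda\bigl(e^{-|\lambda||Y|^2}\bigr)=-4d|\lambda|\,e^{-|\lambda||Y|^2}$ is the heart of the matter and is correct, and your justification that $u_0\in\cS(\H^d)$ is sound.
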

This   result rules out an estimate of the type\refeq{dispSR}   in the setting of the Heisenberg group but  does not exclude a degraded estimate:  for instance in the case when~$u_0$ is compactly supported, then the solution remains compactly supported, in a set transported along the vertical line, so a local $L^\infty$ norm decays to zero with time.
Inspired by  the euclidean strategy displayed above, we shall indeed be able to establish local   decay in the spirit of\refeq{dispSR}.  The precise result is stated in the next paragraph.  As in the euclidean case, such a local dispersive estimate stems from the  explicit expression of  the Schr\"odinger kernel~$ S_t$   on~$\H^d$,  which turns out to be of type\refeq{kernelformula} in a  horizontal strip of~$\H^d$ (see Theorem\refer{STth}).

\medskip  Note also that a lack of dispersion was highlighted for the Schr\"odinger  propagator (associated  with the sublaplacian) in the framework of  H-type groups (\cite{hiero})  or more generally  in the case of  $2$-step    stratified Lie groups (\cite{bfg}).  
  More precisely, if~$p$ denotes   the dimension of the center of the H-type group, M. Del Hierro proved  in \cite{hiero}  sharp dispersive inequalities   for the Schr\"odinger  equation solution (with a~$ | t   |^{-(p-1)/2}$ decay). 
Concerning the more general    case of~$2$-step    stratified Lie groups,  the authors along with C. Fermanian-Kammerer~\cite{bfg}   emphasized   the  key  role played by the canonical skew-symmetric form   in determining the rate of decay of the solutions  of  the Schr\"odinger  equation: they established that if~$p$ denotes the dimension of  the center of  a~$2$-step    stratified Lie group $G$ and~$k$ the dimension of the   radical of its canonical skew-symmetric form, then the solutions  of  the Schr\"odinger  equation on $G$  satisfy  dispersive estimates with a   rate of decay at most  of order~$ | t   |^{- \frac{k+p-1} 2}$.

\subsection{Basic facts about the  Heisenberg group}\label {introbasic} Recall  that the~$d$-dimensional Heisenberg group~$\H^d$ can be defined as~$T^\star\R^d \times\R$   where~$T^\star\R^d$ is  the cotangent bundle,    endowed  with the noncommutative  product law\footnote{We refer to the monographs\ccite{bfgpseudo, fisher, folland, stein2, taylor1, 
thangavelu} and the references therein for further details.}
\begin{equation}\label{lawbis} 
(Y,s)\cdot (Y',s') \eqdefa \bigl(Y+Y',   s+s'+2  \langle\eta,y'\rangle-2\langle\eta',y\rangle\bigr)\, ,
\end{equation}
where
  $w=(Y,s)=(y,\eta,s)$ and $w'=(Y',s')=(y',\eta',s')$ are    elements of~$\H^d$. The variable~$Y$ is called the horizontal variable, while the variable $s$ is known as the vertical variable.

 \medskip  
 The space $\H^d$ is provided with a smooth left invariant measure, the Haar measure, which in  
the coordinate system $(Y, s)$ is simply the Lebesgue measure. 
  In particular, one can define   the following (noncommutative) convolution product  for any two integrable functions~$f$ and~$g$:
\beq
\label {defconvH}f \star g ( w ) \eqdefa \int_{\H^d} f ( w \cdot v^{-1} ) g( v)\, dv 
= \int_{\H^d} f ( v ) g( v^{-1} \cdot w)\, dv\, ,
\eeq
and   the usual Young inequalities are  valid:
\beq
\label {definyoungH}\norm{ f \star g }{L^r(\H^d)} \leq \norm f {L^p(\H^d)} \norm g {L^q(\H^d)} ,
 \,  \,   \hbox{whenever}\, 1\leq p,q,r\leq\infty\, \hbox{ and }\,
 \frac{1}{r} =  \frac{1}{p} + \frac{1}{q} - 1\, .
\eeq
The dilation on~$\H^d$ is defined
for  all $a > 0$ by
\beq \label{defdilation}
\delta_a ( Y,  s ) \eqdefa ( a Y,  a^2s )\, .
\eeq
Since, for all $a > 0$ and any~$f \in L^1(\H^d)$,    
$$
\int_{\H^d} f\big (\delta_a (w)\big)  dw= a^{-(2d+2)}\int_{\H^d} f (w)  dw \, ,
$$  
the  homogeneous dimension of~$\H^d$  is~$Q\eqdefa 2d+2$. 

\medskip

 The natural distance  on $\H^d$ compatible with the product law\refeq{lawbis}  is  called  the Kor\'anyi  distance and is defined by 
\beq
\label {distH}d_\H (w,w') 
\eqdefa \rho_\H(w^{-1}\cdot w')\, , 
\eeq
for all $w$, $w'$ in $ \H^d$, where $\rho_\H$ stands for the distance to the origin
\beq
\label {origdist}\rho_\H(w)= \rho_\H(Y,s) \eqdefa\bigl(|Y|^4 +  s^2\bigr)^{\frac 1 4}
\, . \eeq
In the following~$B_\H(w_0, R)$  denotes    the Heisenberg ball centered at $w_0$ and of radius~$R$ for the distance~$d_\H$ defined by\refeq{distH}, namely 
$$ B_\H(w_0, R) \eqdefa \Big\{w\in\H^d  \, / \, d_\H (w,w_0) < R  \Big\} \,  .$$ 
Observing that the distance $d_\H$ is invariant by left translation, that is to say
$$
\forall (w,w',w_0) \in (\H^d)^3, \  d_\H\left(\tau_{w_0}
(w),  \tau_{w_0}(w' )\right)= d_\H(w,w') 
$$
where~$\tau_{w_0}$ denotes  the  left translation  defined by  
\beq\label {definlefttranslate}
\tau_{w_0}(w)\eqdefa w_0\cdot w\, ,\eeq
one can readily check that 
$ \tau_{w_0}\big (B_\H(0, R)\big) = B_\H(w_0, R)$. 

 \medskip
Most classical analysis tools of~$\R^n$ can be adapted to~$\H^d$, resorting to the following left invariant vector fields 
$$
 \cX_j\eqdefa\partial_{y_j} +2\eta_j\partial_s\andf \Xi_j\eqdefa \partial_{\eta_j} -2y_j\partial_s \with j\in\{1,\dots,d\}\, ,
$$
known as the horizontal left invariant vector fields. In particular,  the sublaplacian is given    by 
$$
 \D_{\H}  \eqdefa \sum_{j=1} ^d (\cX_j^2+\Xi_j^2)\, .
$$
   For instance the Schwartz space~$\cS(\H^d)$, which is nothing else than~$\cS(\R^{2d+1})$, can be characterized by means  of~$\D_{\H}$ and~$\rho_\H$.

     \subsection{Main results}\label {disp}

  The main goal  of this article is to establish local dispersive estimates  for the free linear Schr\"odinger  equation  on $\H ^d$  associated  with the sublaplacian
$$
(S_\H)\qquad \left\{
\begin{array}{c}
i\partial_t u -\D_\H u = 0\\
u_{|t=0} = u_0 \,.
\end{array}
\right. 
$$  
 As in the euclidean case, one can readily establish that the Cauchy problem $(S_\H)$ admits a unique,  global in time solution if~$u_0 \in L^2(\H^d)$, by resorting to    Fourier-Heisenberg  analysis tools   or to functional calculus of the self-adjoint operator $-\D_\H$ (see Section\refer{KS} for further details). 
 Denoting by~$(\mathcal U(t))_{t\in \R} $ the solution operator, namely~$\mathcal U(t)u_0$ is the solution of~$(S_\H)$ at time $t$ associated with the data~$u_0$, then similarly to the euclidean case   $(\mathcal U(t))_{t\in \R} $  is a one-parameter group of unitary operators on $L^2(\H^d)$.

\medskip
The first result we establish  states as follows.
\begin{theorem}
\label{mainth1}
{\sl Given~$w_0 \in \H^d$, let $u_0$ be a function   in $\cD(B_\H(w_0, R_0))$.
Then the solution to the Cauchy problem~$(S_\H)$ associated to $u_0$ 
disperses locally  for large $|t |$, in the sense that, for any positive  constant~$\kappa< \sqrt{4d}$,     the following estimate holds for all $2 \leq p \leq \infty$:
\beq
\label {dispSHloc}
\|u(t,\cdot)\|_{L^p(B_\H(w_0, \kappa {|t |^\frac12}))} \leq
\left( \frac {M_\kappa}  {|t |^{\frac Q 2} }\right)^ {1- \frac 2 p} \|u_0\|_{L^{p'}(\H^{d})}\, \virgp
\eeq for all $|t | \geq T_{\kappa, R_0}$,   where  
\begin{equation} \label{time}
\ds T_{\kappa, R_0}\eqdefa \Big(\frac {R_0} {\sqrt{4d}-\kappa} \Big)^2  \andf  M_\kappa \eqdefa \frac {1}  {(4 \pi)^{\frac Q 2} } \int_{\R} \Big(\frac {2  \tau}  {\sinh 2 \tau}\Big)^{d} \exp\Big(\frac { \kappa^2\tau} 2\Big)d\tau\,,
\end{equation} 
and~$p'$ is the conjugate exponent to~$p$. }
\end{theorem}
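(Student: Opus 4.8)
The plan is to obtain the whole range $2\le p\le\infty$ by interpolating between two endpoints: conservation of the $L^2$ norm at $p=2$, and a genuine pointwise dispersive bound at $p=\infty$; the latter is where the explicit description of the Schr\"odinger kernel $S_t$ furnished by Theorem~\ref{STth} comes into play.

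First I would reduce to the case $w_0=0$. Since $\D_\H$ is left invariant, the propagator $\mathcal U(t)$ commutes with the left translations $\tau_{w_0}$, and $d_\H$ is itself left invariant; replacing $u_0$ by $u_0\circ\tau_{w_0}$ and using that the Haar measure is left invariant, the statement for a general $w_0$ follows from the one for $w_0=0$, in which case $u_0\in\cD(B_\H(0,R_0))$. Recalling from Section~\ref{KS} that $u(t,\cdot)=u_0\star S_t$ and writing the convolution as in\refeq{defconvH}, for every $w\in\H^d$ one has $u(t,w)=\int_{B_\H(0,R_0)}u_0(v)\,S_t(v^{-1}\cdot w)\,dv$, whence $|u(t,w)|\le\|u_0\|_{L^1(\H^d)}\,\sup_{v\in B_\H(0,R_0)}|S_t(v^{-1}\cdot w)|$.

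The core of the $L^\infty$ bound is a geometric observation ensuring that $v^{-1}\cdot w$ stays in the horizontal strip on which Theorem~\ref{STth} describes $S_t$. Since the Kor\'anyi gauge obeys the triangle inequality and $\rho_\H(v^{-1})=\rho_\H(v)$, for $w\in B_\H(0,\kappa|t|^{1/2})$ and $v\in B_\H(0,R_0)$ one has $\rho_\H(v^{-1}\cdot w)=d_\H(v,w)\le\rho_\H(v)+\rho_\H(w)<R_0+\kappa|t|^{1/2}$; and the hypothesis $|t|\ge T_{\kappa,R_0}$ is, by the very definition of $T_{\kappa,R_0}$ in\refeq{time}, exactly the inequality $R_0\le(\sqrt{4d}-\kappa)|t|^{1/2}$, so that $\rho_\H(v^{-1}\cdot w)<\sqrt{4d}|t|^{1/2}$ and $v^{-1}\cdot w$ lies in that strip. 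There Theorem~\ref{STth} controls $S_t$ by a kernel of the type\refeq{kernelformula}, dressed with the weight $(2\tau/\sinh2\tau)^{d}$ inherited from Gaveau's heat kernel and with an exponential factor in $\tau$. Feeding the corresponding bound for $|S_t(v^{-1}\cdot w)|$ into the previous inequality and computing the resulting $\tau$-integral — this is precisely where the explicit form of $T_{\kappa,R_0}$ is used and where the admissibility $\kappa<\sqrt{4d}$ is needed for convergence — one arrives at the dispersive estimate
\[
\|u(t,\cdot)\|_{L^\infty(B_\H(0,\kappa|t|^{1/2}))}\le\frac{M_\kappa}{|t|^{Q/2}}\,\|u_0\|_{L^1(\H^d)}\,,\qquad|t|\ge T_{\kappa,R_0}\,,
\]
with $M_\kappa$ as in\refeq{time}.

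At the other endpoint, $(\mathcal U(t))_{t\in\R}$ being a one-parameter group of unitary operators on $L^2(\H^d)$, restriction to the ball gives $\|u(t,\cdot)\|_{L^2(B_\H(0,\kappa|t|^{1/2}))}\le\|u_0\|_{L^2(\H^d)}$, which is\refeq{dispSHloc} with exponent $1-2/p=0$. Hence the linear map $u_0\mapsto\car_{B_\H(0,\kappa|t|^{1/2})}\,\mathcal U(t)u_0$ is bounded from $L^2$ to $L^2$ with norm $\le1$ and from $L^1$ to $L^\infty$ with norm $\le M_\kappa|t|^{-Q/2}$, and the Riesz--Thorin interpolation theorem with parameter $\theta=1-2/p$ yields\refeq{dispSHloc} for all $2\le p\le\infty$, which completes the proof. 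The substantial part is of course Theorem~\ref{STth} itself: analytically continuing Gaveau's heat-kernel formula from the half-plane of complex numbers with positive real part up to the imaginary axis, determining the horizontal strip on which the continued integral still converges, and tracking the exact exponential weight in $\tau$ it produces — the poles of $\tau\mapsto(\sinh2\tau)^{-d}$ being responsible both for the restriction to a strip and for the concentration of $S_t$ on quantized horizontal hyperplanes. That complex-analytic step is the one I expect to require genuine care; everything downstream of it is soft.
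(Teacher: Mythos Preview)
Your proposal is correct and follows essentially the same approach as the paper: reduce to $w_0=0$ by left invariance, use the convolution representation $u(t,\cdot)=u_0\star S_t$ together with the triangle inequality for $\rho_\H$ and the time threshold $|t|\ge T_{\kappa,R_0}$ to confine $v^{-1}\cdot w$ to the strip where Theorem~\ref{STth} gives the explicit kernel, read off the $L^1\to L^\infty$ dispersive bound, and interpolate against the $L^2$ conservation law. The only cosmetic difference is that you invoke Riesz--Thorin while the paper phrases the last step as real interpolation.
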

  \begin{remark}
\label{mainth1rkcex}
{\sl The counterexample~{\rm(\ref{cex1})} due to Bahouri, G\'erard and Xu in\ccite{bgx} is given~by 
\beq
\label {soldataSH}u(t,Y,s) =  \int_{\R} {\rm e}^{i(s+4dt)\lam} {\rm e}^{-\lam |Y|^2} g(\lam) \, \lam^d d\lam \eeq
with~$g$    in~$\cD(]0,\infty[)$. Although $u_0$ does not belong to~$\cD(\H^{d})$, one can easily check that
  for any~$\delta>0$, 
 as soon as~$|s+4dt |> \delta |t|$ then    for any integer $N$ there is a positive constant~$C $   depending on~$N$  and~$u_0$ 
 such that \beq
\label {estcexdisp} |u(t,w)| \leq \frac {C   } {| s+4dt|^{N}} 
 \leq \frac {C  } {(\delta | t|)^{N}} \, \cdotp\eeq
 On the other hand Estimate~{\rm(\ref{estcexdisp})} fails, for any integer~$N \geq 1$, in the case when~$s=-4td$. This shows the sharpness of the bound on the constant~$\kappa$ appearing in Theorem~{\rm\ref{mainth1}}. More generally  it was established in\ccite{bdg} that  for any integer~$\ell$, denoting by~$L_\ell^{(d-1)}$    the  Laguerre polynomial of order~$\ell$ and type~$d-1$  (see for instance\ccite{Tricomi, gradshteyn, O}), then
 $$
    u^{(\ell)}(t,Y,s)   =\int_{\R} {\rm e}^{i(s+4t(2\ell+d))\lam}  {\rm e}^{ -|\lam||Y|^2 } L_\ell^{(d-1)}(2|\lam||Y|^2 ) g(\lam) \, \lam^d d\lam 
    $$  
    is a solution to  $(S_\H)$, and this solution   satisfies~{\rm(\ref{estcexdisp})}  when~$| s+ 4  (2\ell+d) t |>  \delta|t|, $  and~$|t|$ is large enough. 
 }
\end{remark}

 \medbreak

 As in the euclidean case outlined above,  the (local) dispersive estimate\refeq{dispSHloc}   stems from Young  inequalities\refeq{definyoungH} using an explicit formula of the type\refeq{kernelSR} for the Schr\"odinger kernel on~$\H^d$.  However, the study of the kernel~$S_t$  of the Schr\"odinger operator  on~$\H^{d}$  is more involved  than in the euclidean case,     because  on the one hand the Fourier transform   on~$\H^d$  is an intricate tool and on the other hand~$S_t$ does not enjoy a formulation  of type\refeq {kernelformula} globally on~$\H^d$.  In fact, as   will be seen in Section\refer{KS} (see Proposition\refer{Schkernelfourier}), one can compute~$S_t$  in the sense of distributions,  using  the Fourier-Heisenberg  analysis tools developed in\ccite{bcdh2}, and also  it turns out  (see Theorem\refer{STthdirac}) that   $ S_t$  concentrates  on quantized horizontal hyperplanes of~$\H^d$.  It follows  that  the explicit formula of the type\refeq{kernelSR} that we obtain here is only local. More precisely, our  result states as follows. Its sharpness is discussed in Paragraph~\ref{quant}.
  \begin{theorem}
\label{STth}
{\sl 
The kernel associated with the   free  Schr\"odinger equation~$(S_\H)$  reads for all~$t \neq 0$ 
\begin{equation}\label{SHkernel}S_t(Y,s)=\frac {1}  {(-4i \pi t)^{\frac Q 2} } \int_{\R}  \biggl(\frac {2 \tau}  {\sinh 2 \tau}\biggr)^{d} \, \exp\biggl( - \frac{\tau s}{2t}  - i \frac {|Y|^2\tau}  {2t \tanh  2 \tau} \biggr) d\tau\,  ,
\end{equation}
 provided that~$ |s|< 4d |t|$. }
\end{theorem}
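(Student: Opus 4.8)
The plan is to follow the Euclidean scheme recalled in the introduction: identify $S_t$ with the analytic continuation to the imaginary time $z=-it$ of the heat kernel associated with $\D_\H$, whose closed form is Gaveau's formula \cite{B. Gaveau}. After an affine change of variable in the latter, the convolution kernel $h_t$ of $e^{t\D_\H}$ reads, for $t>0$,
\begin{equation}\label{heatclosed}
h_t(Y,s)=\frac{1}{(4\pi t)^{Q/2}}\int_\R\Bigl(\frac{2\tau}{\sinh 2\tau}\Bigr)^{d}\exp\Bigl(-\frac{1}{2t}\Bigl(\frac{|Y|^{2}\tau}{\tanh 2\tau}-i\tau s\Bigr)\Bigr)\,d\tau .
\end{equation}
For $(Y,s)$ fixed I would introduce the complex-time function $G(z)$, defined by the right-hand side of \eqref{heatclosed} with $t$ replaced by $z$; note that $Q=2d+2$ is even, so $z\mapsto(4\pi z)^{-Q/2}$ is single valued on $\C\setminus\{0\}$ and no branch has to be chosen, and that $G(-it)$ is exactly the right-hand side of \eqref{SHkernel}. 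The theorem then amounts to three assertions: $G$ is the holomorphic extension of $t\mapsto h_t$; the point $-it$ lies in its domain precisely when $|s|<4d|t|$; and there $G(-it)=S_t$ in the sense of distributions.

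The first step is to pin down the domain of $G$. Using that $2\tau/\sinh 2\tau$ is even, bounded, and $O(|\tau|e^{-2|\tau|})$ at infinity, and that $\tau\coth 2\tau=\tau/\tanh 2\tau$ is even, $\ge\tfrac12$, and equal to $|\tau|$ up to an exponentially small error, one checks that for $z=a+ib$ the modulus of the integrand in $G(z)$ is bounded, up to a constant, by
\[
(1+|\tau|)^{d}\exp\Bigl(-2d|\tau|-\frac{a|Y|^{2}\tau\coth 2\tau}{2|z|^{2}}+\frac{bs\,\tau}{2|z|^{2}}\Bigr).
\]
Hence the integral converges absolutely, and locally uniformly in $z$, exactly on the open set
\[
\Omega=\Omega_{Y,s}\eqdefa\Bigl\{z=a+ib\in\C\ :\ |bs|<4d\,|z|^{2}+a\,|Y|^{2}\Bigr\}
\]
(which does not contain $0$), on which $G$ is holomorphic by differentiation under the integral sign. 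Two features are crucial here: $(0,\infty)\subset\Omega$ (there $b=0$ and the right-hand side is $>0$), while $-it\in\Omega$ \emph{if and only if} $|s|<4d|t|$ (there $a=0$, $|z|^{2}=t^{2}$).

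Next, since $G$ agrees with $h_t$ on $(0,\infty)$ by \eqref{heatclosed} and is holomorphic on $\Omega$, it is the analytic continuation of $t\mapsto h_t$ to the connected component of $\Omega$ containing $(0,\infty)$; so I would check that $-it$ sits in that component. Taking $t>0$ (the case $t<0$ being symmetric), fix $R>\max(|t|,|s|/(4d))$ and join $1$ to $-it$ by the path running along $[1,R]$, then along the arc $\{Re^{i\theta}:-\pi/2\le\theta\le 0\}$, then along the segment $\{-i\sigma:t\le\sigma\le R\}$: on the arc $|bs|\le R|s|<4dR^{2}\le 4d|z|^{2}+a|Y|^{2}$, and on the segment the defining inequality of $\Omega$ reduces to $|s|<4d\sigma$, which holds since $\sigma\ge t$ and $|s|<4d|t|$. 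This is the delicate point of the argument: the threshold $|s|<4d|t|$ is precisely where the exponential decay $e^{-2d|\tau|}$ carried by Gaveau's weight is overcome by the growth $e^{|s||\tau|/(2|t|)}$ coming from the vertical variable, so that \eqref{SHkernel} genuinely stops making sense outside this horizontal strip — in keeping with the concentration of $S_t$ on quantized horizontal hyperplanes (Theorem~\ref{STthdirac}).

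Finally, to identify $G(-it)$ with $S_t$ on $\{|s|<4d|t|\}$ I would pass to the boundary along $z_\e\eqdefa\e-it$, $\e\to 0^{+}$. Since $\RE z_\e=\e>0$, on $\Omega\cap\{\RE z>0\}$ the function $G$ coincides with the heat semigroup kernel $h_z$ — which, for $\RE z>0$, is a smooth function holomorphic in $z$, by the classical theory of the holomorphic semigroup $(e^{z\D_\H})$ generated by the nonnegative self-adjoint operator $-\D_\H$ — because both are holomorphic there and agree on $(0,\infty)$; thus $h_{z_\e}(Y,s)=G(z_\e)$. The extra term $-\tfrac{\e}{2|z_\e|^{2}}|Y|^{2}\tau\coth 2\tau\le 0$ only improves the decay, so the integrand is dominated, uniformly for small $\e$ and for $(Y,s)$ in compact subsets of $\{|s|<4d|t|\}$, by a fixed integrable function of $\tau$; dominated convergence then gives $h_{z_\e}\to G(-it)$ in $L^{1}_{\mathrm{loc}}$, hence in $\cD'$, on $\{|s|<4d|t|\}$. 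On the other hand the spectral theorem gives $e^{z_\e\D_\H}\to e^{-it\D_\H}$ in the strong operator topology of $L^{2}(\H^{d})$, whence $h_{z_\e}\to S_t$ in $\cS'(\H^{d})$ — also a direct consequence of the Fourier–Heisenberg representation of $S_t$ in Proposition~\ref{Schkernelfourier}. Uniqueness of the distributional limit then forces $S_t=G(-it)$, i.e. the right-hand side of \eqref{SHkernel}, on $\{|s|<4d|t|\}$, as announced. The only remaining subtlety is the compatibility of these two limiting procedures — the pointwise, analytic-continuation one and the operator-theoretic one defining the Schrödinger propagator.
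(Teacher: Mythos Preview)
Your proposal is correct and follows essentially the same analytic-continuation scheme as the paper: extend Gaveau's heat kernel formula to complex time, check where it remains holomorphic, and pass to the boundary $z\to -it$ while matching the limit against the Schr\"odinger propagator. The paper carries this out by introducing two holomorphic functions $H^1_z$ (the Fourier--Heisenberg expansion, your $h_z$) and $H^2_z$ (Gaveau's integral, your $G$), showing they coincide on the domain $\tilde D_{|s|}=\{z:\RE z>0,\ |z|>|s|/C\}$ for $C<4d$, and then letting $z_p\to -it$ along a sequence in $\tilde D_{|s|}$; the convergence $H^1_{z_p}\to S_t$ in $\cS'(\wh\H^d)$ is obtained explicitly via the semi-norms~\eqref{semipart}. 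Your version packages the $H^1$ side as the abstract holomorphic semigroup $e^{z\D_\H}$ and invokes the spectral theorem for the limit, which is equivalent; you also work on the slightly sharper $(Y,s)$-dependent domain $\Omega_{Y,s}$ (retaining the $a|Y|^2$ contribution that the paper simply drops in~\eqref{firstestimate}), and you include an explicit path argument for connectedness that the paper leaves implicit since $\tilde D_{|s|}$ is visibly connected. None of these differences is substantive.
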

   \begin{remark}
\label{rkSThd}
{\sl 
 Theorem\refer{STth}  highlights  the separate roles of the horizontal and vertical variables of~$\H^{d}$. Note that in\ccite{Muller}, D. M\"uller already emphasized the distinguished role of the horizontal variable in the study of  the Fourier restriction theorem on~$\H^{d}$. 
}
\end{remark}
 Even though  the  dispersive estimate\refeq{dispSHloc}   we   establish for the   Schr\"odinger   operator on~$\H^d$  is  only local,   we are  able to prove that the solutions of the Schr\"odinger  equation~$(S_\H)$ enjoy locally  Strichartz estimates in the spirit of\refeq{dispSTR}.  More precisely, we have  the following result.
\begin{theorem}
\label{STinvth}
{\sl
Under the notations of Theorem{\rm\refer{mainth1}}, given~$\kappa< \sqrt{4d}$ and $(p,q)$ belonging to  the admissible set
\beq
\label {admset}
\cA\eqdefa\Big\{(p,q)\, /\,  \frac2q+\frac{Q}p=\frac Q2 \with 2 \leq p \leq \infty\Big\} \, , \eeq
  there exists a  positive constant  $C(q,\kappa)$ such that,  for all $u_0 \in L^2(\H^d)$ supported in the ball~$B_\H(w_0, R_0)$, for some~$w_0 \in \H^d$,  the  solution to  the Cauchy problem $(S_\H)$  satisfies the following  local Strichartz estimate   
\beq
\label {dispSTlocl2bis}
\|u\|_{L^q(]-\infty, - C_\kappa R_0^2] \cup [C_\kappa R_0^2, +\infty[; L^p(B_\H(0, \kappa\sqrt{|t |})))} \leq
 C(q,\kappa)  \|u_0\|_{L^{2}(B_\H(w_0, R_0))}\, \virgp  \eeq
where $\ds C_\kappa=  {(\sqrt{4d}-\kappa)^{-2}}$.} 
\end{theorem}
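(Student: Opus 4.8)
The plan is to derive the local Strichartz estimate from the local dispersive estimate of Theorem~\ref{mainth1} by a $TT^\star$ argument, carefully localized in space and time so that the unitarity of $(\mathcal U(t))_{t\in\R}$ on $L^2(\H^d)$ and the decay estimate \refeq{dispSHloc} can be used together despite the latter holding only on the ball $B_\H(w_0,\kappa|t|^{1/2})$ and only for $|t|\geq T_{\kappa,R_0}$. First I would fix $\kappa<\sqrt{4d}$ and $u_0\in L^2(\H^d)$ supported in $B_\H(w_0,R_0)$, and intermediate between $\kappa$ and $\sqrt{4d}$: choose $\kappa<\kappa'<\sqrt{4d}$ and apply Theorem~\ref{mainth1} with $\kappa'$ in place of $\kappa$. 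With the notation $C_{\kappa}=(\sqrt{4d}-\kappa)^{-2}$, one has $C_{\kappa}>C_{\kappa'}=T_{\kappa',R_0}/R_0^2$, so that for $|t|\geq C_{\kappa}R_0^2$ we are safely in the regime $|t|\geq T_{\kappa',R_0}$ and moreover $B_\H(0,\kappa\sqrt{|t|})\subset B_\H(w_0,\kappa'\sqrt{|t|})$ once $|t|$ is large enough (since $d_\H$ is left invariant and $w_0$ is fixed; this requires $\kappa'\sqrt{|t|}\geq \kappa\sqrt{|t|}+\rho_\H(w_0)$, i.e.\ $|t|\geq (\rho_\H(w_0)/(\kappa'-\kappa))^2$, which one absorbs by possibly enlarging $C_\kappa$). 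On this time range, \refeq{dispSHloc} with $p=\infty$ gives $\|\mathcal U(t)f\|_{L^\infty(B_\H(0,\kappa\sqrt{|t|}))}\lesssim |t|^{-Q/2}\|f\|_{L^1}$ for $f$ supported in $B_\H(w_0,R_0)$, while unitarity gives the $L^2\to L^2$ bound with constant $1$.

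Next I would set $I_\kappa\eqdefa\,]-\infty,-C_\kappa R_0^2]\cup[C_\kappa R_0^2,+\infty[$ and define the operator $T\colon L^2(\H^d)\to L^q(I_\kappa;L^p(B_\H(0,\kappa\sqrt{|t|})))$ by $Tf(t)\eqdefa \car_{B_\H(0,\kappa\sqrt{|t|})}\,\mathcal U(t)f$ for data supported in $B_\H(w_0,R_0)$. Its formal adjoint composed with itself is $TT^\star F=\int_{I_\kappa}\car\,\mathcal U(t)\mathcal U(s)^\star\big(\car\,F(s)\big)\,ds=\int_{I_\kappa}\car\,\mathcal U(t-s)\big(\car\,F(s)\big)\,ds$, using the group property $\mathcal U(t)\mathcal U(s)^\star=\mathcal U(t-s)$. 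The crucial point is that when $t,s\in I_\kappa$ lie on the same side of $0$ one does not directly control $|t-s|$ from below; I would instead exploit that $\car_{B_\H(0,\kappa\sqrt{|s|})}F(s)$ is supported in a ball, translate it to be centered near $w_0$ (or rather work directly with the kernel representation of $S_{t-s}$ from Theorem~\ref{STth}, valid when $|s|<4d|t|$), and interpolate between the $L^1\to L^\infty$ decay $\lesssim |t-s|^{-Q/2}$ and the $L^2\to L^2$ bound $\lesssim 1$ to obtain $\|TT^\star F(t)\|_{L^p}\lesssim \int |t-s|^{-Q(1/2-1/p)}\|F(s)\|_{L^{p'}}\,ds$. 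With the admissibility relation $\frac2q+\frac Qp=\frac Q2$, the exponent $Q(1/2-1/p)=1-2/q$ lies in $(0,1]$ for $2\le p<\infty$, so the Hardy–Littlewood–Sobolev inequality (endpoint $q=2$, $p=Q/(Q/2-1)$ handled by the Keel–Tao machinery as in the euclidean case, using the extra input that the truncated kernel still obeys the energy and dispersive bounds) yields $\|TT^\star F\|_{L^q(I_\kappa;L^p)}\lesssim \|F\|_{L^{q'}(I_\kappa;L^{p'})}$, whence $\|Tf\|_{L^q(I_\kappa;L^p)}\lesssim\|f\|_{L^2}$, which is exactly \refeq{dispSTlocl2bis}.

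The main obstacle, and the place where this argument genuinely differs from the euclidean one, is that the dispersive bound is \emph{doubly local}: it holds only on a growing Heisenberg ball and only for $|t|$ large. The time cutoff is dealt with precisely by the choice $C_\kappa=(\sqrt{4d}-\kappa)^{-2}$, which forces $|t|$ and $|s|$ into the valid regime and simultaneously—via $|s|<4d|t|$ being automatic on $I_\kappa$ when $t,s$ have the same sign and comparable size, and requiring a short separate discussion otherwise—keeps us inside the region $|s-(\text{shift})|<4d|t-s|$ where the kernel formula \refeq{SHkernel} applies. The spatial cutoff forces one to check that convolution against the truncated kernel $\car_{B_\H(0,\kappa\sqrt{|t|})}S_{t-s}\car_{B_\H(0,\kappa\sqrt{|s|})}$ still satisfies Young's inequality \refeq{definyoungH} with the decaying constant $M_{\kappa'}|t-s|^{-Q/2}$; this follows because truncation only decreases $L^1\to L^\infty$ and $L^2\to L^2$ norms, so the interpolated $L^{p'}\to L^p$ bound is inherited. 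Once these localization bookkeeping points are settled, the remainder is the standard Keel–Tao duality-and-interpolation scheme, and the constant $C(q,\kappa)$ is controlled by $M_{\kappa'}$ together with the Hardy–Littlewood–Sobolev constant for the exponent $1-2/q$.
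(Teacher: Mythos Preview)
Your $TT^\star$ strategy hits a genuine obstruction that you acknowledge but do not resolve. In the bilinear piece $\car_{B_\H(0,\kappa\sqrt{|t|})}\,\mathcal U(t-s)\,\car_{B_\H(0,\kappa\sqrt{|s|})}$, the data fed into $\mathcal U(t-s)$ is supported in $B_\H(0,\kappa\sqrt{|s|})$ and you need the output in $L^\infty(B_\H(0,\kappa\sqrt{|t|}))$ with decay $|t-s|^{-Q/2}$. By the convolution formula this requires the kernel representation \eqref{SHkernel} at points $v^{-1}\cdot w$ with $w\in B_\H(0,\kappa\sqrt{|t|})$, $v\in B_\H(0,\kappa\sqrt{|s|})$; the vertical component of $v^{-1}\cdot w$ is then only controlled by $\kappa^2(\sqrt{|t|}+\sqrt{|s|})^2$, whereas Theorem~\ref{STth} demands it be $<4d|t-s|$. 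When $t,s\in I_\kappa$ have the same sign and are close, $(\sqrt{|t|}+\sqrt{|s|})^2\sim 4|t|$ while $|t-s|$ is arbitrarily small, so the kernel formula is unavailable exactly where the Hardy--Littlewood--Sobolev kernel $|t-s|^{-(1-2/q)}$ is most singular. Your remark that ``$|s|<4d|t|$ is automatic on $I_\kappa$'' confuses the time variables of the $TT^\star$ integral with the vertical Heisenberg variable in the kernel; the constraint is on the latter. And truncation cannot help: it only decreases operator norms you already possess, but here there is no global $L^1\to L^\infty$ bound to truncate from.

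The paper sidesteps $TT^\star$ altogether with a much more elementary argument. Since $u_0$ is supported in a ball of radius $R_0$, H\"older gives $\|u_0\|_{L^{p'}}\le C\,R_0^{Q(\frac12-\frac1p)}\|u_0\|_{L^2}$; inserting this into \eqref{dispSHloc} yields, for $|t|\ge C_\kappa R_0^2$,
\[
\|u(t,\cdot)\|_{L^p(B_\H(w_0,\kappa\sqrt{|t|}))}\lesssim R_0^{Q(\frac12-\frac1p)}\,|t|^{-Q(\frac12-\frac1p)}\,\|u_0\|_{L^2}\,.
\]
On the admissible set $\frac2q+\frac Qp=\frac Q2$ one has $qQ(\frac12-\frac1p)=2$, so direct time integration over $\{|t|\ge C_\kappa R_0^2\}$ produces a factor $(C_\kappa R_0^2)^{-1}$ whose power of $R_0$ exactly cancels $R_0^{qQ(\frac12-\frac1p)}$, leaving a constant depending only on $q$ and $\kappa$ (the endpoint $(p,q)=(2,\infty)$ is mass conservation). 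No duality, no Hardy--Littlewood--Sobolev, no Keel--Tao machinery: the scaling of the time cutoff does all the work, and this is precisely why the theorem is stated with the threshold $C_\kappa R_0^2$.
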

Note that the Strichartz estimate{\rm\refeq{dispSTlocl2bis}} is invariant by scaling (through the scaling~$u (t,w)\mapsto u (\lambda^2    t,\delta_{\lambda}   w)   
$).
Let us underline that there is a duality between the size of the support of~$u_0$ and the time for which the Strichartz estimates holds. Indeed,   letting~$R_0$ 
go to zero,  we find that for an initial data concentrated around some~$w_0 \in \H^d$,    the Strichartz estimate is almost global in time.  Conversely, letting~$R_0$  go to infinity, the time from which{\rm\refeq{dispSTlocl2bis}} occurs is close to infinity.  Let us also emphasize  that
the counterexamples introduced in Remark{\rm\refer{mainth1rkcex}}  show   somehow the optimality of our result, since for these counterexamples a global  integrability  both with respect to $t$ and $s$ independently is excluded.

  \subsection{Refined study of the Schr\"odinger kernel  on~$\H^d$}\label {quant}  
   Theorem \refer{STth}
asserts that~$S_t$, for~$t \neq 0$,      is a decaying smooth function on the strip~$ |s|< 4d |t|$ (with a decay rate of order~$|t|^{-Q/2}$). One may wonder if $S_t$ ($t \neq 0$) which, according to Proposition\refer{Schkernelfourier},  belongs to~$\cS'(\H^d)$ can be identified with a function on the horizontal hyperplanes~$ s = \pm 4d |t|$. The  answer to this question is negative as asserted by the following result.
\begin{theorem}
\label{STthdirac}
{\sl 
With the previous notations,   for all $\pm w_\ell \eqdefa (0,  \pm 4(2\ell+d)|t|)$,  where $t \neq 0$ and~$\ell \in  \N$,   there exists an initial data $u_0^{\pm, \ell} \in \cS(\H^d)$ such that~$u^{\pm, \ell} (t, \cdot) \eqdefa\mathcal U(t)u_0^{\pm, \ell}$  satisfies 
\begin{equation}\label{SHkernellim} u^{\pm, \ell} (t, \pm w_\ell) = u_0^{\pm, \ell} (0)= \langle \delta_0, u_0^{\pm, \ell} \rangle_{\cS'(\H^d)\times \cS(\H^d)}\,  .
\end{equation}}
\end{theorem}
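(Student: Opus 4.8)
The plan is to build the required data from the explicit solutions of~$(S_\H)$ attached to Laguerre functions, recorded in Remark\refer{mainth1rkcex} following\ccite{bdg}: the point is that on the line~$Y=0$ such a solution is nothing but a translation in the vertical variable, at the quantized speed~$4(2\ell+d)$. Fix~$\ell\in\N$ and a nonzero~$g\in\cD(]0,\infty[)$ with~$g\geq0$, and put
\begin{equation*}
u_0^{(\ell)}(Y,s)\eqdefa\int_\R{\rm e}^{is\lambda}\,{\rm e}^{-|\lambda||Y|^2}\,L_\ell^{(d-1)}\bigl(2|\lambda||Y|^2\bigr)\,g(\lambda)\,\lambda^d\,d\lambda\,.
\end{equation*}
By\ccite{bdg}, the function~$u^{(\ell)}(t,Y,s)\eqdefa\int_\R{\rm e}^{i(s+4(2\ell+d)t)\lambda}{\rm e}^{-|\lambda||Y|^2}L_\ell^{(d-1)}(2|\lambda||Y|^2)g(\lambda)\,\lambda^d\,d\lambda$ solves~$(S_\H)$ with data~$u_0^{(\ell)}$, and trivially~$u^{(\ell)}(t,Y,s)=u_0^{(\ell)}\bigl(Y,s+4(2\ell+d)t\bigr)$ for every~$t$. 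To reach \emph{both} hyperplanes~$s=\pm4(2\ell+d)|t|$, I would also use the linear automorphism~$\theta(y,\eta,s)\eqdefa(\eta,y,-s)$ of~$\H^d$: a short computation shows that~$\theta$ intertwines the vector fields~$\cX_j$ and~$\Xi_j$, hence that~$\D_\H(f\circ\theta)=(\D_\H f)\circ\theta$, so that~$u^{(\ell)}\circ\theta$ (acting on the space variables) again solves~$(S_\H)$, now with data~$u_0^{(\ell)}\circ\theta$; since~$\theta$ sends~$(0,s)$ to~$(0,-s)$, this gives~$(u^{(\ell)}\circ\theta)(t,0,s)=u_0^{(\ell)}\bigl(0,-s+4(2\ell+d)t\bigr)$. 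I then set~$u_0^{\pm,\ell}\eqdefa u_0^{(\ell)}$ when the sign~$\pm$ is opposite to that of~$t$, and~$u_0^{\pm,\ell}\eqdefa u_0^{(\ell)}\circ\theta$ otherwise.

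Next I would check that~$u_0^{(\ell)}$ lies in~$\cS(\H^d)=\cS(\R^{2d+1})$, and hence so does~$u_0^{(\ell)}\circ\theta$ since~$\theta$ is a linear isomorphism. As~$\supp g$ is a compact subset of~$]0,\infty[$, the variable~$\lambda$ stays in some interval~$[\lambda_0,\lambda_1]$ with~$\lambda_0>0$, on which~$(\lambda,Y)\mapsto{\rm e}^{-\lambda|Y|^2}L_\ell^{(d-1)}(2\lambda|Y|^2)$ is smooth with every derivative dominated by~$C(1+|Y|)^N{\rm e}^{-\lambda_0|Y|^2/2}$; differentiating under the integral then yields decay of~$u_0^{(\ell)}$ and of each of its derivatives faster than any power of~$|Y|$, uniformly, whereas rapid decay in~$s$ follows by writing~$s^m{\rm e}^{is\lambda}=(-i\partial_\lambda)^m{\rm e}^{is\lambda}$ and integrating by parts~$m$ times in~$\lambda$, the boundary terms vanishing because~$g$ is compactly supported in the open half-line. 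Hence~$u_0^{(\ell)}\in\cS(\H^d)$.

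Then I would identify the explicit solution with the propagator and evaluate it. Since~$u_0^{\pm,\ell}\in\cS(\H^d)\subset L^2(\H^d)$, $(S_\H)$ has a unique~$L^2$ solution (Section\refer{KS}), and the integral above defines a~$C^\infty$ solution carrying the prescribed data, uniqueness gives~$\mathcal U(t)u_0^{\pm,\ell}=u^{\pm,\ell}(t,\cdot)$, which is a continuous function, so its value at~$\pm w_\ell$ is meaningful. Using~$L_\ell^{(d-1)}(0)=\binom{\ell+d-1}{\ell}\neq0$ and the fact that~$\theta$ fixes the origin, the transport identities above give
$$u^{(\ell)}\bigl(t,0,-4(2\ell+d)t\bigr)=u_0^{(\ell)}(0)\andf(u^{(\ell)}\circ\theta)\bigl(t,0,4(2\ell+d)t\bigr)=u_0^{(\ell)}(0)\,;$$
but~$\bigl(0,-4(2\ell+d)t\bigr)$ equals~$-w_\ell$ if~$t>0$ and~$+w_\ell$ if~$t<0$, while~$\bigl(0,4(2\ell+d)t\bigr)$ equals~$+w_\ell$ if~$t>0$ and~$-w_\ell$ if~$t<0$. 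Hence, for any prescribed sign~$\pm$ and any~$t\neq0$, the data~$u_0^{\pm,\ell}$ defined above satisfies~$u^{\pm,\ell}(t,\pm w_\ell)=u_0^{\pm,\ell}(0)=\langle\delta_0,u_0^{\pm,\ell}\rangle_{\cS'(\H^d)\times\cS(\H^d)}$, which is\refeq{SHkernellim}; choosing~$g$ with~$\int g(\lambda)\lambda^d\,d\lambda\neq0$ makes this identity non-vacuous.

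The genuinely substantive facts — that the~$\ell$-th Laguerre profiles are the eigenmodes of~$-\D_\H$ on the~$\ell$-th Fourier--Heisenberg layer, and that the corresponding~$u^{(\ell)}$ solve~$(S_\H)$ — are taken from\ccite{bgx,bdg}; what must be done with some care here is (i) the Schwartz verification, which really relies on~$g$ being supported away from the origin (at~$\lambda=0$ neither~$\lambda\mapsto|\lambda|^d$ nor the Laguerre factor is smooth, so the condition~$\supp g\subset\R\setminus\{0\}$ cannot be relaxed), and (ii) the sign bookkeeping relating~$\pm$, the sign of~$t$ and the direction of the vertical transport, which is precisely what the automorphism~$\theta$ is used for; I expect this last point to be the only place where a slip is likely. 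Finally, it is worth stressing what the statement buys for the question raised just before it: point evaluation of~$\mathcal U(t)u_0$ at~$\pm w_\ell$ acts on~$\cS(\H^d)$ exactly like the Dirac mass~$\langle\delta_0,\cdot\rangle$, which rules out~$S_t$ agreeing with any~$L^1_{\rm loc}$ function on a neighbourhood of~$\pm w_\ell$ — the negative answer announced there.
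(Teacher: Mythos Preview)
Your argument is correct, but it proceeds differently from the paper.  The paper works on the Fourier--Heisenberg side: starting from the distributional representation of~$S_t$ in Proposition~\ref{Schkernelfourier}, it computes $(u_0\star S_t)(w_t)$ with~$w_t=(0,-4dt)$ as the series~\eqref{kernelSchrodingerconvsol}, observes that the translation by~$w_t$ introduces a phase~${\rm e}^{4idt\lam}$, and then chooses~$u_0$ so that~$\cF_\H u_0(n,n,\lam)$ is supported in~$\{n=0,\ \lam<0\}$, which kills the total phase and reduces the series to the Dirac pairing via~\eqref{DiracF}; the general~$\ell$ and the other sign are left implicit.  You instead stay in physical space and use the explicit Laguerre-profile solutions from Remark~\ref{mainth1rkcex} together with their rigid vertical transport, and you handle the second sign by the automorphism~$\theta(y,\eta,s)=(\eta,y,-s)$, which does intertwine~$\cX_j$ and~$\Xi_j$ and hence commutes with~$\D_\H$.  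The two constructions produce the same data (your~$u_0^{(\ell)}$ is, in view of~\eqref{Fradial2}--\eqref{Fradial3}, precisely a radial function with Fourier--Heisenberg transform supported on~$\{|n|=\ell,\ \lam>0\}$), but your route avoids the~$\cS'(\wh\H^d)$ machinery and makes both~$\ell$ and the sign explicit, while the paper's Fourier computation makes the link with~$\cF_\H(\delta_0)$ and formula~\eqref{DiracF} more transparent.  Your Schwartz verification and sign bookkeeping are fine; the observation that~$g$ must be supported away from~$0$ is exactly the point, and your closing remark that the identity rules out~$S_t\in L^1_{\rm loc}$ near~$\pm w_\ell$ correctly captures what the theorem is for.
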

\begin{remark}
\label{speed}
{\sl  Actually, the above theorem can be easily generalized to any  element of the horizontal hyperplanes~$ s = \pm 4(2\ell+d)|t|$,~$t \neq 0$ and~$\ell \in  \N$, namely $(Y_0,  \pm 4(2\ell+d)|t|)$, where~$Y_0$ is some fixed element of $T^\star\R^d$. The  Cauchy data generating a solution which concentrates on   hyperplanes~$ s = \pm 4(2\ell+d)|t|$ are linked to the counterexamples introduced in Remark{\rm\refer{mainth1rkcex}}.
   }
\end{remark}
 
 The above result  shows the optimality of the bound $4d |t|$ in Theorem\refer{STth}. However, we are able to improve this bound when we restrict~$(S_\H)$
to some subspaces of Cauchy data as in the next statement.
\begin{theorem}
\label{better}
{\sl 
There exists an orthogonal decomposition of $L^2(\H^d)$
\begin{equation}\label{decorth} L^2(\H^d)=  \oplus_{m\in \N^d}  L_m^2(\H^d) \,  
\end{equation} 
such that the restriction~$S^{(\ell )}_t$  of $S_t$ to the subspace $\mathscr{V}_\ell (\H^d)\eqdefa \oplus_{|m|\geq \ell}  L_m^2(\H^d)$  is well defined as soon as~$ |s|< 4(2 \ell+ d )|t|$, and satisfies  for any positive  constant~$\kappa< \sqrt{4(d+2\ell)}$,  
  $$ \sup_{|s| \leq \kappa^2|t|} \sup_{Y \in T^*\R^d} \frac {1}  {|t|^{\frac Q 2} } |    S^{(\ell )}_t(Y,s) |\leq C(\ell,\kappa) \, .$$
 }
\end{theorem}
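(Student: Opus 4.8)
The plan is to revisit the Fourier–Heisenberg computation of $S_t$ (Proposition~\ref{Schkernelfourier}) and keep track of the Laguerre expansion in the horizontal variable, rather than summing it into a closed form as in Theorem~\ref{STth}. Recall from Remark~\ref{mainth1rkcex} that the functions
$\ds Y\mapsto {\rm e}^{-|\lambda||Y|^2}L_m^{(d-1)}(2|\lambda||Y|^2)$ — more precisely the tensor products of one‑dimensional Hermite‑type functions indexed by $m\in\N^d$ — diagonalize the action of $-\D_\H$ at fixed vertical frequency $\lambda$, the eigenvalue being $4|\lambda|(2|m|+d)$. This is exactly the orthogonal decomposition \eqref{decorth}: $L_m^2(\H^d)$ is the image under the inverse Fourier–Heisenberg transform of the $m$‑th Hermite sector, and $\mathscr{V}_\ell(\H^d)=\oplus_{|m|\ge\ell}L_m^2(\H^d)$ collects the sectors with $|m|\ge\ell$. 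First I would make this decomposition precise and check it is indeed orthogonal and that $\mathcal U(t)$ preserves each $L_m^2(\H^d)$ (immediate, since $\mathcal U(t)$ is a function of $-\D_\H$).

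Next I would write the kernel $S^{(\ell)}_t$ obtained by projecting $S_t$ onto $\mathscr{V}_\ell(\H^d)$. Following the proof of Theorem~\ref{STth} verbatim but restricting the Hermite sum to $|m|\ge\ell$, one integrates the ODE in $\lambda$ and is led to a representation of the form
\[
S^{(\ell)}_t(Y,s)=\frac{c_d}{|t|^{Q/2}}\int_{\R}\Phi_\ell\!\Big(\frac{|Y|^2\tau}{t\tanh 2\tau},\ \frac{s}{t}\Big)\,\Big(\frac{2\tau}{\sinh 2\tau}\Big)^d\exp\Big(-\frac{\tau s}{2t}\Big)\,d\tau\,,
\]
where $\Phi_\ell$ is the partial sum of the generating function of Laguerre polynomials with the terms $|m|<\ell$ removed; the key gain is that the geometric‑type series defining $\Phi_\ell$ now starts at order $\ell$, so the effective exponential weight in $\tau$ improves from $\exp(d\tau)$-type growth (responsible for the $4d|t|$ threshold) to $\exp((d+2\ell)\tau)$-type growth, which is what pushes the strip of absolute convergence out to $|s|<4(2\ell+d)|t|$. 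Concretely, the Mehler/generating‑function identity $\sum_{m}L_m^{(d-1)}(x)r^{|m|}$ contributes a factor whose only singularities, after the change of variables, sit at $|s|=4(2\ell+d)|t|$ once the first $\ell$ shells are dropped; I would make the tail estimate $\big|\sum_{|m|\ge\ell}(\cdots)r^{|m|}\big|\lesssim |r|^\ell/(1-|r|)^{\text{something}}$ rigorous and combine it with the $\ds\big(\tfrac{2\tau}{\sinh 2\tau}\big)^d$ decay.

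Finally, the uniform bound is obtained as in the proof of Theorem~\ref{mainth1}: on the range $|s|\le\kappa^2|t|$ with $\kappa<\sqrt{4(d+2\ell)}$ the integrand is dominated, after bounding the oscillatory factor $\exp\big(-i\,|Y|^2\tau/(2t\tanh 2\tau)\big)$ trivially by $1$, by an integrable function of $\tau$ independent of $Y$ and of $(t,s)$ in that range — namely a constant times $\big(\tfrac{2\tau}{\sinh 2\tau}\big)^d\exp(c_\ell\kappa^2\tau/2)$ with $c_\ell$ chosen so that the exponent stays below $2(d+2\ell)|\tau|$ at infinity — so the $\tau$‑integral is a finite constant $C(\ell,\kappa)$, giving the claimed $\ds\sup\frac{1}{|t|^{Q/2}}|S^{(\ell)}_t(Y,s)|\le C(\ell,\kappa)$. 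The main obstacle I anticipate is bookkeeping rather than conceptual: correctly identifying the multi‑dimensional Laguerre generating function for $\H^d$ (the $d$‑fold tensor structure produces $L_m^{(d-1)}$ with a shifted type, not $d$ independent copies of $L^{(0)}$), controlling its $\ell$‑tail uniformly, and verifying that the contour/limiting argument of Theorem~\ref{STth} — passing $z\to -it$ through the half‑plane $\RE z>0$ and invoking dominated convergence in $\cS'(\H^d)$ — still applies sector by sector so that $S^{(\ell)}_t$ is genuinely the restriction of the distribution $S_t$ to $\mathscr{V}_\ell(\H^d)$ and not merely a formal object.
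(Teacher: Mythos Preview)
Your proposal is correct and follows essentially the same route as the paper: truncate the Hermite/Laguerre sum at $|m|\ge\ell$, recognize via the Laguerre generating function that the tail carries an extra factor $r^\ell=e^{-4\ell|\tau|}$ (so the effective decay improves from $e^{-2d|\tau|}$ to $e^{-2(2\ell+d)|\tau|}$), and rerun the analytic-continuation argument of Theorem~\ref{STth} on the enlarged strip $|s|<4(2\ell+d)|t|$. The paper carries this out by writing the truncated integrand as $\Phi_\ell(x,r)=e^{-x/2}\bigl(\tfrac{e^{-rx/(1-r)}}{(1-r)^d}-\sum_{k\le\ell-1}r^kL_k^{(d-1)}(x)\bigr)$, bounding it for $|\tau|\ge\tau_0$ via Taylor's remainder formula (your anticipated tail estimate $\lesssim r^\ell(1+|x|)^\ell$), handling $|\tau|\le\tau_0$ by direct estimates on the finitely many subtracted Laguerre terms, and then passing to the limit $z\to -it$ in $\cS'(\H^d)$ exactly as you describe---so the ``bookkeeping'' obstacles you flag are precisely the ones the paper works through.
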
 
 \begin{remark}
\label{decrk}
{\sl  Decomposition{\rm\refeq{decorth}} is strongly tied to the spectral representation of the sublaplacian~$-\D_\H$.   In order to give  a flavor of the above result, let us  point out that,    as we shall see in Section{\rm\refer{Fourier}},  the Fourier-Heisenberg transform exchanges~$-\D_\H$
 with  the harmonic oscillator. Then in some sense,{\rm\refeq{decorth}} consists in a  decomposition of $L^2(\H^d)$ along Hermite-type functions, via the Fourier-Heisenberg transform.}
\end{remark}

 \medbreak

\subsection{Main steps of the proof of the main results and layout of the paper}\label {steps}
Since the   Schr\"odinger equation   on~$\H^{d}$ is invariant under   left translations, one can assume without loss of generality in the proof of Theorem~\ref{mainth1}    that $w_0= 0$. By Young inequalities\refeq{definyoungH}, Theorem\refer{mainth1}    readily follows from Theorem\refer{STth} (reducing the assumption~$|s|<   {4d|t |}$ to the fact that~$\rho_\H (w) <  \sqrt{4d|t |}$). To prove Theorem\refer{mainth1}, we  are thus reduced to establishing Theorem\refer{STth}.  
Roughly speaking the proof of Theorem\refer{STth} is achieved in three steps. In the first step, using the   Fourier-Heisenberg  analysis on tempered distributions developed in\ccite{bcdh2} (see also Section\refer{tempered} in this paper),  
 we establish that   the kernel~$S_t$ of the Schr\"odinger operator on $\H^d$   belongs to~$\cS'(\H^d)$ (Proposition\refer{Schkernelfourier}). It is well-known since the   paper of B. Gaveau\ccite{B. Gaveau}, that the solution to the heat equation on~$\H ^d$  associated  with the sublaplacian writes for all $t > 0$ 
$$
 u(t, \cdot )=  \frac {1}  {t^\frac Q2 } u_0 \star (h \circ \delta_{\sqrt t }) \, ,$$
where~$ \delta_{\sqrt t }$ is the dilation operator defined in~(\ref{defdilation}) and~$h$ is the function in the  Schwartz class~$\cS(\H^d)$ given by
\begin{equation}\label{SHkernelex}
h(Y,s)  \eqdefa  \frac {1}  {(4\pi)^{\frac Q2} } \int_{\R}  \biggl(\frac {2 \tau}  {\sinh 2 \tau}\biggr)^{d} \, \exp\biggl( i \frac{\tau s}2  -  \frac { |Y|^2\tau}  {2\tanh  2 \tau} \biggr) d\tau\, .
 \end{equation}
Then the second step  is devoted to the  proof  of the fact that the fundamental solution of the heat equation on~$\H^d$ coming from     Fourier analysis on~$\H^d$ coincides with  the explicit formula\refeq{SHkernelex} established by B. Gaveau\ccite{B. Gaveau} (see Proposition\refer{Heatkernelfourier}). 
 This step uses Melher's formula, along with the  Fourier approach developed in\ccite{bcdh, bcdh2}. The  last step   concludes the proof following the general method of the euclidean case  via complex analysis,  described above (see Section\refer{localcomputkernel}). It is in this final step that the restriction~$ |s|< 4d |t|$ appears.
 
  \medskip      As usual, the proof of the  local Strichartz estimates stated in Theorem~\ref{STinvth}
 is straightforward from the local dispersive estimate\refeq{dispSHloc} thanks to standard functional analysis arguments.

  \medskip  Finally, the refined study of the Schr\"odinger kernel  on~$\H^d$ (through Theorems~\ref{STthdirac} and~\ref{better}) is derived by a combination of Fourier-Heisenberg tools and the spectral analysis of the harmonic oscillator.

 \medskip
 
 Let us describe the organization of the paper.
Section\refer{Fourier}  is dedicated to  a brief description of  the Fourier transform~$\cF_\H$ on~$\H^d$ and the space of frequencies~$\wh\H^d$, as well as the  extension of $\cF_\H$  to tempered distributions -- which is at the heart of the matter in this paper.  In Section\refer{KH}, we recover  the explicit formula of   the heat kernel  on~$\H^d$  established by B. Gaveau in\ccite{B. Gaveau},  using     Fourier analysis on $\H^{d}$. In Section\refer{KS},  we investigate  the kernel  of the Schr\"odinger operator on~$\H^d$ and prove Theorem\refer{STth}, while    Section\refer{proofmain1} is devoted to the proof of Theorem\refer{mainth1}    thanks to Theorem~\ref{STth}.  Then, we establish  the local Strichartz estimates (Theorem~\ref{STinvth}).   In Section\refer{kerneldirac}, we undertake a refined study of~$S_t$ and establish Theorems\refer{STthdirac} and\refer{better} making use of  the Fourier-Heisenberg approach developed in\ccite{bcdh, bcdh2}.  
\medskip  

To avoid heaviness, all along this article~$C$ will denote  a  positive  constant   which may vary from line to line.    We also use $A\lesssim B$  to
denote an estimate of the form~$A\leq C B$.

\medskip

{\bf Acknowledgements. } $ $The authors thank  Nicolas Lerner  and Jacques Faraut very warmly for  their help and input  concerning  
the proof of Theorem~\ref{better}.

\section{Fourier analysis on $\H^d$}\label{Fourier}

\subsection{The Fourier transform on $\H^d$}\label {fouriertransf}
The   Fourier transform on $\H^d$ is defined using irreducible unitary representations of $\H^d$.  It is thus not 
a complex-valued function on some ``frequency space" as in the euclidean case, but a family of bounded operators on~$L^2(\R^d)$ (see for instance~\cite{astengo2, corwingreenleaf, farautharzallah, FF, stein2, taylor1, thangavelu} for further details). 
Recently, in\ccite{bcdh} and\ccite{bcdh2}  the authors  introduced an equivalent, intrinsic definition of the Fourier transform on~$\H^d$ in terms of functions acting on a frequency set denoted~$\wt\H^d\eqdefa\N^{2d}\times \R\setminus\{0\}$. 
More precisely, denoting the elements of this set by~$\wh w\eqdefa(n,m,\lambda)$, the Fourier transform of  an integrable function on~$\H^d$ is defined     in the following way:
 \beq
\label {defF}
\forall \wh w \in \wt\H^d \, , \quad \cF_\H f(\wh w) \eqdefa \int_{\H^d}  \overline{{\rm e}^{is\lam} \cW(\wh w,Y)}\, f(Y,s) \,dY\,ds\,,
 \eeq
 with $\cW$ the Wigner transform of the (renormalized) Hermite functions
 \beq
\label {def}
 \cW(\wh w,Y)
\eqdefa\int_{\R^d} {\rm e}^{2i\lam\langle \eta,z\rangle} H_{n,\lam}(y+z) H_{m,\lam} (-y+z)\,dz\,, \quad H_{m,\lam} (x)\eqdefa |\lam|^{\frac d 4} H_m(|\lam|^{\frac 12} x)\,   ,
 \eeq
 with~$ \suite H m {\N^d}$ the Hermite orthonormal basis of~$L^2(\R^d)$ given by the eigenfunctions of the harmonic oscillator:
$$
 -(\D -|x|^2) H_m= (2|m|+d) H_m\, .
$$
We recall that
 \beq
\label {hermite}
H_m(x) \eqdefa  \Bigl(\frac 1 {2^{|m|} m!}\Bigr) ^{\frac 12} \prod_{j=1}^d  \big(-\partial_j H_0(x)+ x_jH_0(x)\big)^{m_j} \, ,
\eeq
with $H_0(x)\eqdefa \pi^{-\frac d 4} {\rm e}^{-\frac {|x|^2} 2}$,  $m!\eqdefa m_1!\dotsm m_d!\,$ and $\,|m|\eqdefa m_1+\dots+m_d.$ 

  In\ccite{bcdh}, the authors show that the completion of the set~$\wt \H^d$  for the    distance  
$$
\wh d(\wh w,\wh w') \eqdefa  \bigl|\lam(n+m)-\lam'(n'+m')\bigr|_{\ell^1(\N^d)} + \bigl |(n-m)-(n'-m')|_{\ell^1(\N^d)}+d|\lam-\lam'|  
$$
 is   the set
$$
\wh \H^d\eqdefa\wt\H^d
 \cup \wh \H^d_0 \with \wh \H^d_0 \eqdefa {\R_{\mp}^d}\times \Z^d
\andf
{\R_{\mp}^d}\eqdefa  (\R_-)^d\cup (\R_+)^d\,.
$$
In this setting, the  classical 
     statements of Fourier analysis 
hold   in a similar way to the euclidean case.  In particular,  the inversion and Fourier-Plancherel formulae read:
\beq
\label {inverseFourierH}f(w) = \frac {2^{d-1}}  {\pi^{d+1} }   \int_{\wt \H^d} 
{\rm e}^{is\lam} \cW(\wh w, Y)\cF_\H f(\wh w) \, d\wh w 
 \eeq
and
\beq
\label {FourierplancherelH}
(\cF_\H f|\cF_\H g)_{L^2(\wt \H^d)}  = \frac {\pi^{d+1}} {2^{d-1}} (f|g)_{L^2(\H^d)}\, ,
 \eeq
where   the measure~$d\wh w$ is defined in the following way\footnote{As shown in\ccite{bcdh2},   the measure~$d\wh w$ can be extended by $0$  on  $\wh \H^d_0$.}: for any function~$\theta$ on~$\wt \H^d$,
$$
\int_{\wt \H^d} \theta (\wh w)\,d\wh w \eqdefa \int_{\R} \sum_{(n,m)\in \N^{2d} }\theta(n,m,\lam) |\lam|^d\,d\lam\,.
$$
Straightforward computations give
    \beq
\label {actionlap}
\cF_\H( -\Delta_\H f) (\wh w) =4| \lam | (2|m|+d)\cF_\H( f)(\wh w)\,.
 \eeq
According to\refeq{inverseFourierH}-\eqref{FourierplancherelH}, one can easily check that 
\beq \label{defL2m} L^2(\H^d)= \displaystyle \oplus_{m\in \N^d}  L_m^2(\H^d) \,,
\eeq
in the following way:  any function $f  \in L^2(\H^d)$  can be split as
 \beq
\label {l2m}
f=\sum_{m\in \N^d} f_m\, \,\mbox{with} \,\, f_m(Y,s)= \frac {2^{d-1}}  {\pi^{d+1} }  \sum_{n\in \N^{d} } \int_{\R} 
{\rm e}^{is\lam} \cW((n, m, \lam), Y)\cF_\H f(n, m, \lam) |\lam|^d\,d\lam, \eeq
and
$$\|f\|^2_{L^2(\H^d)}= \sum_{m\in \N^d} \|f_m\|^2_{L^2(\H^d)}\,.$$
Let us also note that if~$f$ and~$g$ are two functions of~$L^1(\H^d)$ then for any~$\wh w=(n,m,\lambda)$ in~$\wt\H^d,$ there holds
\beq
\label {newFourierconvoleq1}
 \cFH (f\star g) (\wh w)   = ( \cF_\H f \cdot \cF_\H g)(\wh w)\eqdefa \sum_{p\in \N^{d}} \cF_\H f(n,p,\lam)\cF_\H g(p,m,\lam)\,.
 \eeq
As we shall see,  the heat and Schr\"odinger  kernels  on $\H ^d$  are  radial, in the sense that they are  invariant under the action of  the unitary group of~$T^\star\R^d$. In addition, being functions of~$-\Delta_\H$  they are   even. In fact, the Fourier transform of radial functions turns out to be simpler than in the general case: if~$f$ is a  radial function  in $L^1(\H^d)$, then for any~$(n,m,\lam) \in \wt \H^d$,
\begin{equation}\label {Fradial}  \cFH (f) (n, m, \lam)=  \cFH (f) (n, m, \lam)\delta_{n,m}= \cFH (f) (|n|, |n|, \lam) \delta_{n,m} \,,
\end{equation} 
with, for all $\ell \in \N$, 
\begin{equation}\label {Fradial2} 
\cFH (f) (\ell, \ell, \lam) =  \begin{pmatrix} \ell +d-1 \\ \ell \end{pmatrix}^{-1}\int_{\H^d}   {{\rm e}^{-is\lam}} {\rm e}^{ -|\lam||Y|^2 } L_\ell^{(d-1)}(2|\lam||Y|^2 )  f(Y,s) \,dY\,ds\, ,
\end{equation} 
where  $L_\ell^{(d-1)}$ stands for  the  Laguerre polynomial\footnote{The interested reader can consult for instance\ccite{askey, beals, Tricomi} and the references therein.} of order $\ell$ and type $d-1$. 
  
\medskip  Obviously the inversion   formula  writes in that case
\begin{equation}\label {Fradial3} 
f(w) = \frac {2^{d-1}}  {\pi^{d+1} }  \sum_{\ell \in \N} \int_{\R} 
{\rm e}^{is\lam} \wt \cW(\ell, \lam, Y)\cFH (f) (\ell, \ell, \lam)\, |\lam|^d\,d\lam\, ,
\end{equation} 
where    \begin{equation}
\label{princident} \wt \cW(\ell,\lam,Y) \eqdefa \sumetage {n\in\N^d} {|n|= \ell} \cW(n,n,\lam,Y ) ={\rm e}^{- |\lam||Y|^2 } L_\ell^{(d-1)} ( 2  |\lam|  |Y|^2)\, .
 \end{equation}
\subsection{The Fourier transform  on~$\cS'(\H^{d})$}\label {tempered}
The new approach of the   Fourier-Heisenberg  transform developed in\ccite{bcdh} enabled the authors in\ccite{bcdh2}  to extend $\cF_\H$ to $\cS'(\H^d)$,  the set of tempered distributions: note that since the Schwartz class~$\cS(\H^d)$ coincides with $\cS(\R^{2d+1})$ then similarly~$\cS'(\H^{d})$  is nothing else than~$\cS'(\R^{2d+1})$. Roughly speaking, the first step to achieve this  extension consists in characterizing~$\cS(\wh \H^d)$,    the range of $\cS(\H^d)$, by $\cF_\H$.  It will be useful to recall in the following   that according to~\cite{bcdh2}, the space~$\cS(\wh \H^d)$ can be  equipped  with semi-norms~$\|\cdot\|_{N,\cS(\wh \H^d)}$ and that in particular, for all~$\theta \in \cS(\wh \H^d)$ and~$N \in  \N$,  there exists~$C_N$  such that for all~$\hat w=(n,m, \lam) \in \wt  \H^d$
 \begin{equation}
\label{semipart} 
|\theta (\hat w)| \leq C_N(1+  4|\lam|(2|m|+d))^{-N} \|\theta \|_{N,\cS(\wh\H^d)}\, .
 \end{equation}
 We refer to\ccite{bcdh2}  for    the definition of~$\cS(\wh \H^d)$ and further details. Then the result follows  by duality, as in the euclidean case, once   shown that the Fourier transform~$\cF_\H$ is a bicontinuous isomorphism between the spaces~$\cS(\H^{d})$ and~$\cS(\wh \H^d)$\footnote{Note that a  first attempt in the  description of the range of~$\cS(\H^{d})$ by the Fourier-Heisenberg transform goes back to the pioneering works by D. Geller in\ccite{geller,geller2}, where asymptotic series are used. 
One can also consult the paper of F. Astengo, B.  Di Blasio and F. Ricci\ccite{astengo2}.}.

 The map~$\cF_{\H}$ can thus be continuously extended from~$\cS'(\H^d)$ into~$\cS'(\wh\H^d)$ in the following way:
\beq
\label {defS'}
\cF_{\H} : \left \{
\begin{array}{ccl}
\cS'(\H^d) & \longrightarrow & \cS'(\wh \H ^d) \\
T & \longmapsto & \Bigl[\theta \mapsto  \langle \cF_{\H}T, \theta\rangle_{\cS'(\H^d)\times \cS(\H^d)}= \langle T,{}^t\!\cF_{\H}\theta\rangle_{\cS'(\H^d)\times \cS(\H^d)}\Bigr]\, ,
\end{array}
\right.
\eeq
where, according to\refeq{inverseFourierH},  \beq
\label {linetFF-1}
 {}^t\cF_{\H}\theta(y,\eta,s) \eqdefa \frac {\pi^{d+1}} {2^{d-1}} (\cF_{\H}^{-1}\theta)(y,-\eta,-s) \, .
\eeq
In particular one can compute the Fourier transform of the Dirac mass:
$$
\cF_{\H}(\delta_0)   =  {\mathbf 1} _{\{(n,m,\lam)\,/\ n=m\}}\,, 
$$
that is to say, for any  $\theta$ in~$\cS(\wh\H^d)$  
\beq
\label {DiracF}
\langle \cF_{\H}(\d_0),\theta\rangle _{\cS'(\wh \H ^d)\times\cS(\wh \H ^d)}  = \sum_{n\in \N^d}\int_{\R} \theta (n,n,\lam)|\lam|^dd\lam \, . 
\eeq

\medbreak

It will be useful later on to notice that~$\cS'(\wh\H^d)$  contains (after suitable identification) all functions with moderate growth, that are defined as the locally integrable functions~$\theta$ on~$\wh \H^d $ such that
  for some large enough integer~$N,$ the map
 \beq
\label {moderate}(n,m,\lambda)\longmapsto \big(1+|\lam|(n+m|+d)+|n-m|\big)^{-N}\theta(n,m,\lam)\eeq belongs to~$L^\infty(\wt \H^d)$. It is proved in\ccite{bcdh2} that any such function can be identified  with a tempered distribution on~$\wh\H^d.$  
\medbreak

Let us end this   introduction on Fourier analysis on the Heisenberg group by recalling   that if $T$ is a tempered distribution on $\H^d$,  then  for all~$f$   in~$\cS(\H^d)$  and all~$w$ in~$\H^d,$  
$$
(T\star f)(w)=\langle T,\check f\circ\tau_{w^{-1}}\rangle_{\cS'(\H^d)\times\cS( \H^d)} \, ,
$$
and   
\begin{equation}\label{eq:defconvright}
(f\star T)(w)=\langle T,\check f\circ\tau^{\rm r}_{w}\rangle_{\cS'(\H^d)\times\cS( \H^d)} \, ,
\end{equation} 
where
\beq
\label {defchech}
  \check f (w)\eqdefa f (w^{-1}) \,, 
  \eeq
and~$\tau_w$   denotes the left translation operator by $w$ defined in~(\ref{definlefttranslate}) while~$\tau^{\rm r}_w$    is the right translation operator by $w$ defined by
 \begin{equation}\label{eq:defright}
 \tau^{\rm r}_w (w')\eqdefa w'\cdot w\, .
 \end{equation}

\medbreak

 \section {On the kernel  of the heat operator on $\H^{d}$}\label{KH}
A striking  consequence of   Fourier analysis on $\H^{d}$  developed in\ccite{bcdh2} is  that 
it provides another  proof  of the fact that the fundamental solution of the heat equation on~$\H^d$  coincides with  the explicit formula established by B. Gaveau in\ccite{B. Gaveau}. 
 First note  the following representation coming from Fourier-Heisenberg analysis.\begin{proposition}
\label {Heatkernelfourier}
{\sl If $u$ denotes the solution to the   free  heat equation  on $\H^d$ 
$$
(H_\H)\qquad \left\{
\begin{array}{rcl}
\ds \partial_t u -  \D_\H u & =  &0\\
{ u}{}_{|t=0} &= & u_0\, ,
\end{array}
\right.
$$
where $u_0$ is a given integrable function on $\H^d,$   then for all $t > 0$  there holds
$$
 u(t, \cdot )= u_0 \star h_t\, ,
$$
where   $h_t$ is   defined by   
$$h_t(Y,s)\eqdefa \frac {2^{d-1}}  {\pi^{d+1} }  \sum_{n\in \N^{d} } \int_{\R} {\rm e}^{is\lam} \cW \big((n,n,\lambda), Y\big)  {\rm e}^{- 4 t |\lam| (2 |n|+d)}   |\lam|^d d \lam\, .
$$
}
\end{proposition}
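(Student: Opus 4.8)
The plan is to compute both sides via Fourier-Heisenberg analysis and match them to Gaveau's formula. First I would take the Fourier transform on $\H^d$ of the equation $(H_\H)$. Since $u_0$ is integrable, $\cF_\H u(t, \cdot)$ is a family of operators (or, via the intrinsic picture, a function on $\wt \H^d$), and using the identity \eqref{actionlap}, namely $\cF_\H(-\D_\H f)(\wh w) = 4|\lam|(2|m|+d)\cF_\H f(\wh w)$, the PDE $(H_\H)$ becomes a decoupled family of ODEs in time indexed by $\wh w = (n,m,\lam)$: $\partial_t \cF_\H u(t,\wh w) = -4|\lam|(2|m|+d)\cF_\H u(t,\wh w)$. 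Integrating gives $\cF_\H u(t,\wh w) = {\rm e}^{-4t|\lam|(2|m|+d)}\cF_\H u_0(\wh w)$. The key observation is that the multiplier ${\rm e}^{-4t|\lam|(2|m|+d)}$ depends only on $|\lam|$ and $|m|$, so by \eqref{Fradial}--\eqref{Fradial2} it is (the Fourier transform of) a radial function, and by the convolution identity \eqref{newFourierconvoleq1} this product structure exactly corresponds to a convolution $u(t,\cdot) = u_0 \star h_t$ where $\cF_\H h_t(n,m,\lam) = {\rm e}^{-4t|\lam|(2|m|+d)}\delta_{n,m}$.

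Next I would invert this Fourier transform to recover $h_t$ explicitly. Applying the inversion formula \eqref{inverseFourierH} to the (radial, even) symbol $\cF_\H h_t(n,m,\lam) = {\rm e}^{-4t|\lam|(2|n|+d)}\delta_{n,m}$ gives directly
\[
h_t(Y,s) = \frac{2^{d-1}}{\pi^{d+1}} \sum_{n\in\N^d}\int_\R {\rm e}^{is\lam}\,\cW\big((n,n,\lam),Y\big)\,{\rm e}^{-4t|\lam|(2|n|+d)}\,|\lam|^d\,d\lam\,,
\]
which is precisely the stated formula for $h_t$. The only point requiring care here is the justification that the sum-integral converges and genuinely represents a tempered distribution (or, since Gaveau's kernel lies in $\cS(\H^d)$, an honest Schwartz function): this follows from the rapid decay of ${\rm e}^{-4t|\lam|(2|n|+d)}$ for $t>0$ in both $n$ and $\lam$, combined with the moderate-growth bounds on $\cW$ recalled in Section~\ref{tempered}, so one can appeal to the identification of moderate-growth functions on $\wh\H^d$ with tempered distributions and the continuity of $\cF_\H^{-1}$ on $\cS'(\wh\H^d)$.

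Finally, to make the bridge with Gaveau's explicit integral \eqref{SHkernelex}, I would use the radial inversion formula \eqref{Fradial3}, which replaces the sum over $n\in\N^d$ by a sum over $\ell = |n| \in \N$ together with the identity \eqref{princident}, $\wt\cW(\ell,\lam,Y) = {\rm e}^{-|\lam||Y|^2} L_\ell^{(d-1)}(2|\lam||Y|^2)$. This turns $h_t$ into
\[
h_t(Y,s) = \frac{2^{d-1}}{\pi^{d+1}}\sum_{\ell\in\N}\int_\R {\rm e}^{is\lam}\,{\rm e}^{-|\lam||Y|^2}L_\ell^{(d-1)}(2|\lam||Y|^2)\,{\rm e}^{-4t|\lam|(2\ell+d)}\,|\lam|^d\,d\lam\,,
\]
and then one resums the Laguerre series using the generating function (Mehler-type formula) $\sum_{\ell\ge0} L_\ell^{(d-1)}(x)\,r^\ell = (1-r)^{-d}\exp\big(-\tfrac{xr}{1-r}\big)$ with $r = {\rm e}^{-8t|\lam|}$ and $x = 2|\lam||Y|^2$. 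After the change of variables $\tau = 2t|\lam|$ (treating the signs of $\lam$ carefully, which is why the final integral runs over all of $\R$) and simplification of the resulting $\sinh$ and $\tanh$ factors, one recovers exactly \eqref{SHkernelex}; Gaveau's scaling relation $u(t,\cdot) = t^{-Q/2}u_0\star(h\circ\delta_{\sqrt t})$ then matches $h_t = t^{-Q/2}\,h\circ\delta_{\sqrt t}$. The main obstacle is the bookkeeping in this last resummation step: keeping track of the absolute values $|\lam|$ versus $\lam$ in the exponentials, justifying the interchange of summation and integration (uniform convergence of the Laguerre series for $t>0$), and performing the change of variables so that the two-sided integral in \eqref{SHkernelex} emerges correctly. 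Everything upstream of that is a direct application of the Fourier-Heisenberg machinery recalled in Section~\ref{Fourier}.
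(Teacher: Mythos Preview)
Your first two paragraphs are correct and follow essentially the same route as the paper: apply $\cF_\H$, use \eqref{actionlap} to reduce to an ODE in $t$, integrate, recognize the product structure via \eqref{newFourierconvoleq1} as convolution with a radial kernel whose Fourier transform is ${\rm e}^{-4t|\lam|(2|n|+d)}\delta_{n,m}$, and invert by \eqref{inverseFourierH}. That is exactly the proof of Proposition~\ref{Heatkernelfourier}, and nothing more is needed.

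Your third paragraph is not part of this proposition; it is a sketch of Theorem~\ref{Heatkernel}. There your route genuinely differs from the paper's. You propose to pass to the radial form \eqref{Fradial3}--\eqref{princident} and resum the Laguerre series via the generating function $\sum_{\ell\geq 0} L_\ell^{(d-1)}(x)r^\ell=(1-r)^{-d}{\rm e}^{-xr/(1-r)}$, then change variables to produce \eqref{SHkernelex}. The paper instead keeps the Hermite picture: it sums $\sum_m H_{m,\lam}(y+z)H_{m,\lam}(-y+z)\,r^m$ using Mehler's formula (Lemma~\ref{lemmaMehler}), and only afterwards performs the Gaussian integral in $z$ to arrive at the $\sinh$/$\tanh$ expression. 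Your approach is slightly more direct and avoids the auxiliary $z$-integration and the hyperbolic-identity bookkeeping, at the cost of having to invoke \eqref{princident} (which itself encodes the reduction from Hermite to Laguerre). Both arguments are valid; the paper's has the advantage of staying within the Wigner--Hermite formalism throughout, which is what is later reused in Section~\ref{KS}.
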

\begin{proof}
Applying  $\cF_\H$ to the heat equation and taking advantage of\refeq{actionlap}, we get 
 for all~$(n,m,\lambda)$ in~$\wt \H^d,$
$$
\left\{
\begin{array}{rcl}
\ds  \frac  {d \, \wh u_\H} {dt}  (t, n,m, \lam)&\! =\!  &- 4|\lam|(2|m|+d) \wh u_\H (t,n,m, \lam)\\
{ \wh u_\H}{}_{|t=0} &\!=\! &   \cF_\H  {u_0}\, .
\end{array}
\right.
$$
By time integration, this implies  that  for all~$(n,m,\lambda)$ in~$\wt \H^d,$  
\beq
\label  {noyaychaleurb}
 \wh u_\H (t, n,m, \lam)=   {\rm e}^{- 4 t |\lam|(2|m|+d)} \cF_\H  {u_0} (n,m, \lam) \, .
\eeq
According to\refeq{newFourierconvoleq1}, we deduce that 
$$  \wh u_\H (t, n,m, \lam) = (  \cF_\H  {u_0}\cdot \theta_t)(n,m,\lam)\with \theta_t(n,m,\lam) \eqdefa {\rm e}^{- 4 t |\lam|(2|n|+d)} \d_{n,m}  \, ,$$
where $\d_{n,m}$ denotes the Kronecker symbol, which implies that  
$$u(t, \cdot )= u_0 \star h_t \with  (\cF_\H  {h_t})(n,m, \lam)\eqdefa {\rm e}^{- 4 t |\lam|(2|n|+d)} \d_{n,m} \, . $$
This concludes the proof of the proposition thanks to   
 the inversion  formula~(\ref{inverseFourierH}). \end{proof}  

\medbreak

\begin{remark}
\label {Heatkernelrk}
{\sl Performing the change of variable $  t \lam \mapsto \lam$  in the heat kernel given by Proposition~{\rm\ref{Heatkernelfourier}} readily implies that for all $t > 0$ 
$$h_t(Y,s)= \frac{1}{t^{\frac Q 2}} h\biggl(\frac{Y}{\sqrt{t}} \,, \frac s t\biggr)\, \virgp$$
with 
\beq 
\label{defhF}
h(Y,s)\eqdefa\frac {2^{d-1}}  {\pi^{d+1} }  \sum_{m\in \N^{d} } \int_{\R} 
{\rm e}^{is\lam} \cW\big((n,n,\lambda), Y\big) {\rm e}^{- 4 |\lam| (2 |m|+d)} \delta_{n,m} 
 |\lam|^d d\lam \,.
 \eeq}
\end{remark}

\medbreak
The following remarkable result due to B. Gaveau (\cite{B. Gaveau}) asserts that   the heat  operator on~$\H^d$ has a  convolution kernel in~$\cS(\H^{d})$. 
 \begin{theorem}  [\cite{B. Gaveau}]
\label {Heatkernel}
{\sl There exists a function $h$ in~$\cS(\H^{d})$  such that for any~$u_0 \in L^1(\H^d)$, the  solution to~$(H_\H)$
writes for all $t > 0$ 
$$
 u(t, \cdot )= u_0 \star h_t\, ,
 $$
where   $h_t$ is defined by 
 \beq
\label  {noyaychaleur}
h_t(Y,s)= \frac{1}{t^{\frac Q 2}} h\biggl(\frac{Y}{\sqrt{t}} \, , \frac s t\biggr) 
\eeq
and the function $h$ is given by the  formula
\beq 
\label{defhG}
h(Y,s)
\eqdefa  \frac {1}  {(4\pi )^{\frac Q 2} } \int_{\R}  \biggl(\frac {2 \tau}  {\sinh 2 \tau}\biggr)^{d} \, \exp\biggl( i \frac{\tau s}{2}   -  \frac {|Y |^2\tau}  {2  \tanh  2 \tau} \biggr) d\tau  \,  .
\eeq
}
\end{theorem}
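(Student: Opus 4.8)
The plan is to obtain Theorem~\ref{Heatkernel} from Proposition~\ref{Heatkernelfourier} by \emph{explicitly summing} the series in the Fourier--Heisenberg representation of the heat kernel, the main tool being Mehler's formula in its Laguerre incarnation. By the scaling identity~\eqref{noyaychaleur} it suffices to treat $t=1$, i.e. to identify
$$h(Y,s)\eqdefa\frac{2^{d-1}}{\pi^{d+1}}\sum_{n\in\N^d}\int_\R e^{is\lambda}\,\cW\big((n,n,\lambda),Y\big)\,e^{-4|\lambda|(2|n|+d)}\,|\lambda|^d\,d\lambda$$
with the Gaveau integral~\eqref{defhG}, and to check $h\in\cS(\H^d)$; the convolution representation for $L^1$ data is then exactly the one of Proposition~\ref{Heatkernelfourier}.

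First I would group the multi-indices $n\in\N^d$ by their length $|n|=\ell$ and use the closed form~\eqref{princident}, $\sum_{|n|=\ell}\cW((n,n,\lambda),Y)=e^{-|\lambda||Y|^2}L_\ell^{(d-1)}(2|\lambda||Y|^2)$, to replace the double sum by $\sum_{\ell\ge0}e^{-4|\lambda|(2\ell+d)}e^{-|\lambda||Y|^2}L_\ell^{(d-1)}(2|\lambda||Y|^2)$. For $\lambda\neq0$ one then applies the generating function of the generalized Laguerre polynomials, $\sum_{\ell\ge0}L_\ell^{(d-1)}(x)\,r^\ell=(1-r)^{-d}\exp\!\big(-\tfrac{rx}{1-r}\big)$, with $r=e^{-8|\lambda|}$ and $x=2|\lambda||Y|^2$; this converges absolutely since $0<r<1$. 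Using $e^{-4d|\lambda|}(1-e^{-8|\lambda|})^{-d}=(2\sinh4|\lambda|)^{-d}$ and $1+\tfrac{2e^{-8|\lambda|}}{1-e^{-8|\lambda|}}=\coth4|\lambda|$, the bracket collapses to $\big(\tfrac{|\lambda|}{2\sinh4|\lambda|}\big)^{d}\exp\!\big(-\tfrac{|\lambda||Y|^2}{\tanh4|\lambda|}\big)$, hence
$$h(Y,s)=\frac{2^{d-1}}{\pi^{d+1}}\int_\R e^{is\lambda}\,\Big(\frac{|\lambda|}{2\sinh4|\lambda|}\Big)^{d}\exp\!\Big(-\frac{|\lambda||Y|^2}{\tanh4|\lambda|}\Big)\,d\lambda\,.$$
(The exchange of $\sum_n$ with $\int d\lambda$ is legitimate by Fubini's theorem: the inner series converges absolutely for $\lambda\neq0$ and, once summed, the integrand is bounded near $\lambda=0$ because of the $|\lambda|^d$ weight and decays exponentially as $|\lambda|\to\infty$.) Finally the substitution $\lambda=\tau/2$, together with the evenness and positivity of $\tau\mapsto\tau/\sinh2\tau$ and $\tau\mapsto\tau/\tanh2\tau$, turns this into~\eqref{defhG}, after matching the numerical constants: the factor in front of $(\tau/\sinh2\tau)^{d}$ becomes $\tfrac{2^{d-1}}{\pi^{d+1}}\cdot\tfrac1{4^d}\cdot\tfrac12=\tfrac1{2^{d+2}\pi^{d+1}}=2^{d}(4\pi)^{-Q/2}$ since $Q=2d+2$.

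To see that $h\in\cS(\H^d)=\cS(\R^{2d+1})$, I would differentiate~\eqref{defhG} under the integral sign: the amplitude $(2\tau/\sinh2\tau)^{d}$ and each of its $\tau$-derivatives decay like $e^{-d|\tau|}$; the quantity $\tau/\tanh2\tau$ is $\ge\tfrac12$ on all of $\R$, so $\exp(-|Y|^2\tau/(2\tanh2\tau))\le e^{-|Y|^2/4}$ provides Gaussian decay in $Y$ uniformly in $\tau$; and integrating by parts arbitrarily many times in $\tau$ against $e^{i\tau s/2}$ — the boundary terms vanishing by the exponential decay — yields arbitrary polynomial decay in $s$. Differentiating in $(Y,s)$ only brings down polynomial factors in $(Y,\tau)$ that are reabsorbed by the same estimates, which gives the Schwartz bounds.

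The crux is the middle step: correctly invoking the Laguerre/Mehler generating function and carrying out the hyperbolic-function simplifications without arithmetic slips, together with the behaviour at $\lambda=0$, where the bare series $\sum_\ell e^{-8|\lambda|\ell}$ diverges but is compensated by the Plancherel weight $|\lambda|^d$ (this is also where $d\ge1$ is used). The scaling reduction, the change of variables, and the Schwartz estimates are all routine. A fully dual alternative would be to verify directly, from the radial inversion formula~\eqref{Fradial2}, that the Gaveau function~\eqref{defhG} has Fourier--Heisenberg transform $e^{-4|\lambda|(2|n|+d)}\delta_{n,m}$, whence the identity by injectivity of $\cF_\H$; this is the same special-function computation read backwards.
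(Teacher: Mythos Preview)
Your proof is correct and reaches the same conclusion as the paper's, but the summation step is organised differently. The paper reduces to $d=1$, writes out the Wigner transform as an integral in an auxiliary variable $z$, applies Mehler's formula for \emph{Hermite} functions (their Lemma~\ref{lemmaMehler}) to sum $\sum_{m\in\N}e^{-8m|\lambda|}H_{m,\lambda}(y+z)H_{m,\lambda}(-y+z)$, and then carries out the resulting Gaussian integral in $z$. You instead work directly in dimension $d$: you first use the radial identity~\eqref{princident} to collapse $\sum_{|n|=\ell}\cW((n,n,\lambda),Y)$ into a single Laguerre polynomial, and then apply the \emph{Laguerre} generating function~\eqref{genf} (which is Mehler's formula in a different guise, and which the paper itself invokes later, in the proof of Theorem~\ref{better}). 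Your route is a little more streamlined in that it avoids both the dimension reduction and the explicit Gaussian Fourier transform --- those two steps are already baked into~\eqref{princident}. The paper's route, on the other hand, is more self-contained since it proves the needed Mehler identity from scratch rather than quoting~\eqref{princident}. Your verification that $h\in\cS(\H^d)$ via integration by parts in $\tau$ is also correct, and fills in a point the paper states but does not argue.
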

\begin{proof}
Our purpose here is to establish that  the   formula coming from Fourier analysis given by~(\ref{defhF}), namely
$$
h(y,\eta,s)
= \frac {2^{d-1}}  {\pi^{d+1} }  \sum_{m\in \N^{d} } \int_{\R\times \R^{d}} 
{\rm e}^{is\lam+2i\lam \langle \eta,z\rangle} {\rm e}^{- 4 |\lam| (2 |m|+d)}
H_{m,\lam} (y+z)
H_{m,\lam} (-y+z) \,dz |\lam|^d d\lam
$$
coincides with  the   explicit expression of the heat kernel~(\ref{defhG})
   given by Theorem\refer{Heatkernel}.
The proof   relies on   Melher's formula  (see\ccite{Feynman})
 \begin{equation}\label{mehler} \sum_{m \in \N} P_m(x)\, P_m(\wt x) \, r^m = \frac1{\sqrt{1-r^2}} \,  {\rm exp}\Big( \frac {2 x \wt x r-(x^2+\wt x^2) r^2} {1-r^2} \Big) \,,\end{equation}
that holds true  for all~$x, \wt x$ in $\R$ and $r$ in $]-1,1[,$
 where $P_m$ denotes   the Hermite polynomial of order $m$ defined by 
 $$  P_m (x)\eqdefa  \pi^{\frac 1 4}H_{m} (x)  {e}^{\frac {|x|^2} 2} \, , $$
 with $H_m$    the Hermite function  introduced in\refeq{hermite}.
 \medbreak
 To this end, we shall   use      the following  lemma.
  \begin{lemma}\label {lemmaMehler}
{\sl 
Under the above notations, there holds for all~$(y,z)\in \R^2$ and all positive real numbers~$\lam$ and $t,$
$$
\sum_{m \in\N} {\rm e}^{-2m t\lam} H_{m, \lam}(z-y) \, H_{m,\lam }(z+y) = \frac1{\pi^{\frac 1 2}} \, \Big(\frac{  {\lam}}{1-{\rm e}^{- 4  t\lam  }}  \Big)^\frac12 
 \exp \Big( - \lam z^2 \tanh (t\lam)
 - \frac{\lam y^2}{\tanh (t\lam)} \Big)
\, .
$$}
\end{lemma}
\begin{proof}
Applying the Mehler formula\refeq{mehler}  to the rescaled Hermite functions\refeq{hermite} 
 yields
$$
\displaylines{\quad
\sum_{m \in \N}  {\rm e}^{- 2t m \lam   }  H_{m, \lam  }(z-y) \, H_{m, \lam}(z+y)  =   \frac1{\pi^{\frac 1 2}} \, {\rm e}^ {- \lam (z^2+y^2)} \Big(\frac{  {\lam}}{1-{\rm e}^{- 4t \lam   }}  \Big)^\frac12 
\hfill\cr\hfill
  \times \exp  \Big(\frac{1}{1-{\rm e}^{- 4t  \lam   }}\Big(
2 \lam (z^2 - y^2){\rm e}^{- 2t  \lam   } - 2\lam (z^2 + y^2){\rm e}^{- 4t  \lam   }
\Big)\Big)\, .}
$$
The result follows from the following identities:
$$
\displaylines{\quad
- \lam  (z^2+y^2)  + \frac{1}{1-{\rm e}^{- 4t  \lam   }}\Big(
2 \lam (z^2 - y^2){\rm e}^{- 2t  \lam   } - 2\lam (z^2 + y^2){\rm e}^{- 4t  \lam   }
\Big)\hfill\cr\hfill = -\frac{\lam}{1-{\rm e}^{- 4t  \lam   }} \big(
z^2 (1-{\rm e}^{-2t \lam})^2 + y^2 (1+{\rm e}^{-2t \lam})^2 
\big)\, , }
$$
$$
 \frac{ (1-{\rm e}^{-2t\lam})^2}{1-{\rm e}^{- 4t  \lam   }} =
 \frac{ ({\rm e}^{t \lam}-{\rm e}^{-t\lam})^2}{{\rm e}^{2t\lam}-{\rm e}^{- 2t  \lam  }}  =
  \frac{ {\rm e}^{t \lam}-{\rm e}^{-t\lam}}{{\rm e}^{t\lam}+{\rm e}^{- t  \lam  }}
 = \tanh (t\lam)  \, , 
$$
and
$$
 \frac{ (1+{\rm e}^{-2t\lam})^2}{1-{\rm e}^{- 4t  \lam   }} =
 \frac{ ({\rm e}^{t \lam}+{e}^{-t\lam})^2}{{\rm e}^{2t\lam}-{\rm e}^{- 2t  \lam  }}   =
  \frac{{\rm e}^{t \lam}+{\rm e}^{-t\lam}}{{\rm e}^{t\lam}-{\rm e}^{- t  \lam  }} =\frac1{ \tanh (t\lam)} \, \cdotp
$$
The lemma is proved.
\end{proof}

\medbreak
Let us return to the proof of Theorem~\ref{Heatkernel}.
In order to establish that the two formulae~(\ref{defhF}) and~(\ref{defhG}) coincide, it suffices to consider the case when~$d=1$:~(\ref{defhF}) becomes
\begin{equation} \label{formulausef}
h(y,\eta,s)= \frac {1}  {\pi^{2} }  \sum_{m\in \N } \int_{\R^2} 
{\rm e}^{is\lam+2i\lam\eta z} {\rm e}^{- 4 |\lam| (2 m+1)}
H_{m,\lam} (y+z)
H_{m,\lam} (-y+z) \,dz |\lam| d\lam\,.
\end{equation}
Then   applying  Lemma\refer{lemmaMehler} with $t=4|\lambda|$,  we find that
$$
\begin{aligned}
h(y,\eta,s)&= \frac {1}  {\pi^{\frac 5 2} } \int_{\R^2} \!
{\rm e}^{is\lam +2i\lambda\eta z- 4 |\lam| }  \biggl(\frac{|\lambda|}{1-{\rm e}^{-16|\lambda|}}\biggr)^{\frac12}\!\\
&\qquad \times
\exp\biggl(-|\lambda| z^2\tanh (4 |\lam|) -\frac{|\lambda| y^2}{\tanh(4|\lambda|)}\biggr)
|\lambda|  d\lam dz\, .
\end{aligned}
$$
Performing  the change of variables~$|\lam|^{\frac12} \, z\mapsto   z$, this gives rise to 
$$
h(y,\eta,s)= \frac {1}  {\pi^{\frac 5 2} } \int_{\R} 
{\rm e}^{is\lam - 4 |\lam| - \frac{|\lam|  y^2}{\tanh (4 |\lam| )}} {\mathcal F}\big({\rm e}^{-
\tanh (4 |\lam|  )\, |\cdot|^2}
\big)\big(2|\lam|^{\frac12} \eta\big) \,\Big(\frac{  {|\lam|}}{1-{\rm e}^{- 16|\lam|  }}  \Big)^\frac12 |\lam|^{\frac12} d\lam\,.
$$
Since $${\mathcal F}\big({e}^{-
\tanh (4 |\lam|  )\,  |\cdot|^2}
\big)\big(2|\lam|^{\frac12}\eta\big)= \sqrt{ \frac \pi {
\tanh (4 |\lam|  )}} \, {\rm e}^{-\frac {|\lam| \eta^2}{\tanh (4 |\lam|  )}}\, ,$$
 we discover that
 $$
h(y,\eta,s)= \frac {1}  {\pi^{2} } \int_{\R} 
{\rm e}^{is\lam - 4 |\lam| - \frac{|\lam|  (y^2+\eta^2)}{\tanh (4 |\lam| )}} 
\Big(\tanh (4 |\lam|  ) (1-{\rm e}^{- 16 |\lam|  })\Big)^{-\frac12}
 \,  |\lam|  \,  d\lam\,.
$$
But
$$
\tanh (4  |\lam|  ) (1-{\rm e}^{-16  |\lam| }) 
= 4 \, {\rm e}^{-8 |\lam| } \sinh^2(4 |\lam| )\, ,
$$
which ends  the proof of Theorem~\ref{Heatkernel}.
\end{proof}

\medbreak
 
 \section {On the kernel  of the Schr\"odinger operator on $\H^d$}\label{KS}
 \subsection {Representation of  the free Schr\"odinger equation}\label{repsolS} Contrary to the heat equation (and as in the euclidean case recalled in the introduction), the kernel  of the Schr\"odinger operator does not belong to the  Schwartz class~$\cS(\H^d)$.  Nevertheless, one can  solve explicitly the Schr\"odinger  equation  $(S_\H)$
  by means of the   Fourier-Heisenberg  transform introduced in Section\refer{Fourier}, in the following way.

  \begin{proposition}
\label {Schkernelfourier}
{\sl The solution to the   free  Schr\"odinger equation
$$
(S_\H)\qquad \left\{
\begin{array}{c}
i\partial_t u -\D_\H u = 0\\
u_{|t=0} = u_0\,,
\end{array}
\right. 
$$
 reads for all $t \neq 0$  and all~$u_0 \in \cS(\H^d)$
\begin{equation}\label{eq:schconv}
 u(t, \cdot )= u_0 \star S_t\,,
 \end{equation}
where   $S_t$ denotes  the tempered distribution on $\H^d$ defined for all $\varphi$ in $\cS(\H^d)$ by  
$$\langle S_t, \varphi\rangle_{\cS'(\H^d)\times \cS(\H^d)} = \langle {\rm e}^{4 i t |\lam|(2|n|+d)} \d_{n,m},\theta\rangle_{\cS'(\wh\H^d)\times\cS(\wh \H^d)}  \, ,$$  
with $\varphi= {}^t\!\cF_{\H}\theta$, according to  Notation~{\rm(\ref{linetFF-1})}.  
}
\end{proposition}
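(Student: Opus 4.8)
The plan is to mimic the proof of Proposition\refer{Heatkernelfourier}: apply the Fourier--Heisenberg transform~$\cFH$ to~$(S_\H)$, solve the resulting scalar ODE in~$t$ for each fixed~$\wh w=(n,m,\lam)$, and then invert. The only structural difference with the heat case is that the heat multiplier~${\rm e}^{-4t|\lam|(2|m|+d)}$ is replaced by the purely oscillatory multiplier~${\rm e}^{4it|\lam|(2|m|+d)}$: it is bounded but does not decay, so the resulting kernel will only be a tempered distribution, not an element of~$\cS(\H^d)$. Concretely, by\refeq{actionlap} the nonnegative self-adjoint operator~$-\Delta_\H$ is carried by~$\cFH$ to multiplication by~$4|\lam|(2|m|+d)$, so functional calculus gives that~$\mathcal U(t)={\rm e}^{-it\Delta_\H}$ is carried to multiplication by~${\rm e}^{4it|\lam|(2|m|+d)}$, i.e.
$$\cFH\big(\mathcal U(t)u_0\big)(n,m,\lam)={\rm e}^{4it|\lam|(2|m|+d)}\,\cFH u_0(n,m,\lam)\, ,\qquad u_0\in\cS(\H^d)\, .$$
Setting~$m_t(n,m,\lam)\eqdefa {\rm e}^{4it|\lam|(2|n|+d)}\,\d_{n,m}$, the right-hand side is exactly the matrix-type product~$(\cFH u_0\cdot m_t)(n,m,\lam)$ in the sense of\refeq{newFourierconvoleq1}.

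The next step is to make sense of~$S_t$. Since~$|m_t|\le1$, the function~$m_t$ trivially has moderate growth in the sense of\refeq{moderate}, hence identifies with an element of~$\cS'(\wh\H^d)$; equivalently, the estimate\refeq{semipart} shows that for~$N$ large the linear form
$$\theta\longmapsto \int_{\wt\H^d} m_t(\wh w)\,\theta(\wh w)\,d\wh w=\int_\R\sum_{n\in\N^d}{\rm e}^{4it|\lam|(2|n|+d)}\,\theta(n,n,\lam)\,|\lam|^d\,d\lam$$
is well defined and continuous on~$\cS(\wh\H^d)$. One then \emph{defines}~$S_t\in\cS'(\H^d)$ to be the unique tempered distribution with~$\cFH S_t=m_t$; by the definition\refeq{defS'} of~$\cFH$ on~$\cS'(\H^d)$ together with the transpose formula\refeq{linetFF-1}, the identity~$\langle\cFH S_t,\theta\rangle=\langle S_t,{}^t\!\cFH\theta\rangle$ is precisely the duality prescription in the statement, namely~$\langle S_t,\varphi\rangle=\langle m_t,\theta\rangle$ whenever~$\varphi={}^t\!\cFH\theta$.

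It remains to identify~$\mathcal U(t)u_0$ with~$u_0\star S_t$. Here I would use the extension of the convolution theorem to the pairing~$\cS(\H^d)\star\cS'(\H^d)$ developed in\ccite{bcdh2}: for~$u_0\in\cS(\H^d)$ and~$T\in\cS'(\H^d)$ with~$\cFH T$ of moderate growth, one has~$\cFH(u_0\star T)=\cFH u_0\cdot\cFH T$, the product again being that of\refeq{newFourierconvoleq1}. Applying this with~$T=S_t$ gives~$\cFH(u_0\star S_t)=\cFH u_0\cdot m_t=\cFH(\mathcal U(t)u_0)$, and injectivity of~$\cFH$ on~$\cS'(\H^d)$ yields~$u(t,\cdot)=u_0\star S_t$, which is\refeq{eq:schconv}. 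To avoid invoking the distributional convolution theorem, an alternative is to compute~$(u_0\star S_t)(w)=\langle S_t,\check u_0\circ\tau^{\rm r}_w\rangle$ directly from\refeq{eq:defconvright}, insert~$\varphi=\check u_0\circ\tau^{\rm r}_w$ (whose Fourier transform is explicit in terms of~$\cFH u_0$) into the definition of~$S_t$, and recognize the inversion formula\refeq{inverseFourierH} applied to~$\mathcal U(t)u_0$.

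I expect the genuine obstacle to be this last step: unlike the heat kernel, which lies in~$\cS(\H^d)$ so that the elementary identity\refeq{newFourierconvoleq1} for~$L^1$ functions suffices, here~$S_t$ is a non-integrable tempered distribution, and one must carefully justify both the convolution~$u_0\star S_t$ and the identity~$\cFH(u_0\star S_t)=\cFH u_0\cdot m_t$ within the tempered-distribution Fourier--Heisenberg calculus of\ccite{bcdh2}. The more routine points are the continuity of the multiplier pairing~$\theta\mapsto m_t\theta$ on~$\cS(\wh\H^d)$ (which follows from\refeq{semipart} once~$N$ is large enough, taking some care near~$\lam=0$) and the interchange of~$\cFH$ with the spectral multiplier~${\rm e}^{-it\Delta_\H}$ provided by functional calculus.
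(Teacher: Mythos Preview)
Your proposal is correct and follows essentially the same approach as the paper: apply~$\cFH$ to~$(S_\H)$ and integrate the resulting ODE to obtain the multiplier~${\rm e}^{4it|\lam|(2|m|+d)}$, observe that~$\Theta_t(n,m,\lam)={\rm e}^{4it|\lam|(2|n|+d)}\delta_{n,m}$ has moderate growth so that~$S_t\in\cS'(\H^d)$, and then identify~$u_0\star S_t$ via the definition of distributional convolution. The paper carries out the last step exactly by your ``alternative'' route, computing~$(u_0\star S_t)(w)=\langle S_t,\check u_0\circ\tau^{\rm r}_w\rangle$ from\refeq{eq:defconvright} and\refeq{linetFF-1} rather than invoking a general tempered convolution theorem; your anticipated ``genuine obstacle'' is therefore handled in the paper at the same level of detail you sketch.
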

\begin{proof}
Arguing as for the proof of Proposition\refer{Heatkernelfourier}, we start by applying  $\cF_\H$ to $(S_\H)$, which thanks to\refeq{actionlap} implies that 
$$
\left\{
\begin{array}{rcl}
\ds i  \frac  {d \, \wh u_\H} {dt}  (t, n,m, \lam)&\! =\!  &- 4|\lam|(2|m|+d) \wh u_\H (t,n,m, \lam)\\
{ \wh u_\H}{}_{|t=0} &\!=\! &   \cF_\H  {u_0}\, ,
\end{array}
\right.
$$
and leads by  integration to \beq
\label  {noyayschb}
 \wh u_\H (t, n,m, \lam)=   {\rm e}^{4 i t |\lam|(2|m|+d)} \cF_\H  {u_0} (n,m, \lam) \, ,
\eeq for all~$(n,m,\lambda)$ in~$\wt \H^d$. Then taking advantage of\refeq{newFourierconvoleq1}, we find that 
$$  \wh u_\H (t, n,m, \lam) = (  \cF_\H  {u_0}\cdot \Theta_t)(n,m,\lam)\with \Theta_t(n,m,\lam) \eqdefa {\rm e}^{4 i t |\lam|(2|n|+d)} \d_{n,m}  \, .$$
One can easily check that $\Theta_t$ is a function with moderate growth in the sense of\refeq{moderate},   and thus as it was proved in\ccite{bcdh2}, it is a tempered distribution on $\wh\H^d$. This ensures that the  Schr\"odinger kernel  $S_t$ belongs to $\cS'(\H^d)$. 

\medskip Finally combining\refeq{linetFF-1} together with\refeq{eq:defconvright},   we readily gather that   for all~$u_0$   in~$\cS(\H^d)$  and all~$w$ in~$\H^d,$
\begin{equation} \begin{aligned}\label{solSHkernel}  (u_0 \star S_t)(w) = \langle S_t, \check u_0 \circ\tau^{\rm r}_{w}\rangle_{\cS'(\H^d)\times \cS(\H^d)} & = \langle {\rm e}^{4 i t |\lam|(2|n|+d)} \d_{n,m},\theta\rangle_{\cS'(\wh\H^d)\times\cS(\wh \H^d)}  
\\ & = \sum_{n\in \N^d}\int_{\R}{\rm e}^{4 i t |\lam|(2|n|+d)}   \theta (n,n,\lam)|\lam|^dd\lam \, , \end{aligned}\end{equation}
 where  $\check u_0 \circ\tau^{\rm r}_{w}= {}^t\!\cF_{\H}\theta$, which completes the proof of the proposition.    \end{proof}  
\subsection {Computation of  the  Schr\"odinger kernel on Heisenberg strips}\label {localcomputkernel}
Our  goal  now   is to  establish Theorem\refer{STth}.  As already mentioned,  the proof of Formula\refeq{SHkernel} goes along the same lines as the euclidean proof, though more involved.   Thanks to Theorem\refer{Heatkernel},   the solution to the heat equation~$(H_\H)$ writes 
$$
 f(t, \cdot )= f_0 \star h_t\,,
 $$ 
 where   $h_t$ is given  for all $t > 0$ by
 $$
  \begin{aligned} h_t(Y,s)&= \frac {1}  {(4\pi t)^{\frac Q 2} } \int_{\R}  \biggl(\frac {2 \tau}  {\sinh 2 \tau}\biggr)^{d} \, \exp\biggl( i \frac{\tau s}{2t}   -  \frac {|Y |^2\tau}  {2 t \tanh  2 \tau} \biggr) d\tau\\ &=\frac {2^{d-1}}  {\pi^{d+1} }  \sum_{m\in \N^{d} } \int_{\R} 
{\rm e}^{i s \lam} {\rm e}^{- 4 t |\lam| (2 |m|+d)}\cW\big((m,m,\lam),Y\big)
|\lam|^d d\lam \,.
\end{aligned} 
$$
To achieve our goal, the first step   consists in observing  that  the 
maps
$$z    \longmapsto   H^1_z(Y,s)  \andf z    \longmapsto   H^2_z(Y,s) $$
with 
\begin{equation}\label{defH1}
\ds H^1_z(Y,s) \eqdefa \ds \frac {2^{d-1}}  {\pi^{d+1} }  \sum_{m\in \N^{d} } \int_{\R} 
{\rm e}^{i s \lam} {\rm e}^{- 4 z |\lam| (2 |m|+d)} \cW\big((m,m,\lam),Y\big) |\lam|^d d\lam 
 \end{equation}
and \begin{equation}\label{defH2} H^2_z(Y,s) \eqdefa \ds      \frac {1}  {(4\pi z)^{\frac Q 2} } \int_{\R}  \biggl(\frac {2 \tau}  {\sinh 2 \tau}\biggr)^{d} \, \exp\biggl( i \frac{\tau s}{2z}  -  \frac { |Y |^2 \tau}  {2 z \tanh  2 \tau} \biggr) d \tau
\end{equation}
are,  for   all $(Y,s)$ in $\H^d$, holomorphic on a suitable domain of $\C$.

\bigskip Actually  on the one hand, performing the change of variables $\beta= \lam(2|m|+d)$ in each integral of the right-hand side of\refeq{defH1}, we get
 $$
 H^1_z(Y,s) = \ds \frac {2^{d-1}}  {\pi^{d+1} }  \sum_{m\in \N^{d} }  \frac 1 {(2|m|+d)^{d+1}}\int_{\R} 
{\rm e}^{i s \frac \beta {2|m|+d}} {\rm e}^{- 4 z |\beta| } \cW\Big(\Big(m,m,\frac \beta {2|m|+d}\Big),Y\Big) |\beta|^d d\beta \, ,
$$ where obviously in each term of the above identity  the integrated function is holomorphic on $\C$. Moreover, using the fact that the modulus of   the Wigner transform of the   Hermite functions is bounded by $1$, we obtain  for all $z$ in $\C$ satisfying ${\rm Re}(z) \geq a >0 $
\begin{multline*} 
\int_{\R} \Big|{\rm e}^{i s \frac \beta {2|m|+d}} {\rm e}^{- 4 z |\beta| }
 \cW\Big(\Big(m,m,\frac \beta {2|m|+d}\Big),Y\Big) \Big| \,  |\beta|^d d\beta \leq  \int_{\R} {\rm e}^{- 4 a |\beta| } |\beta|^{d} d\beta < \infty \andf \\ 
 \int_{\R} \Big|\partial_z\Big({\rm e}^{i s \frac \beta {2|m|+d}} {\rm e}^{- 4 z |\beta| } \cW\Big(\Big(m,m,\frac \beta {2|m|+d}\Big),Y\Big) \Big)\Big| \,  |\beta|^d d\beta \leq 4 \int_{\R} {\rm e}^{- 4 a |\beta| } |\beta|^{d+1} d\beta < \infty\,,
\end{multline*}  
which by       Lebesgue's derivation theorem ensures that  the map $z  \longmapsto    H^1_{z}$ is holomorphic on the domain $\ds D\eqdefa \big\{z\in\C, {\rm Re}(z)  >0  \big\} $.

\medskip On the other hand,  we have by definition  for all $z$ in $\C^*$ 
$$  
H^2_{z} (Y,s)=  \frac {1}  {(4\pi z)^{\frac Q 2} }    \int_{\R}  \biggl(\frac {2 \tau}  {\sinh 2 \tau}\biggr)^{d} \, \exp\biggl( i \frac{\tau s}{2z}  -  \frac { |Y |^2 \tau}  {2 z \tanh  2 \tau} \biggr) d\tau  \, ,
$$
where  of course the integrated function is holomorphic on $\C^*$.  Now our aim is to apply Lebesgue's derivation theorem to establish that $H^2_{z} (Y,s)$ is holomorphic on some domain of $\C$.

\medskip Writing
 $$
  \frac { |Y |^2 \tau}  {2 z \tanh  2 \tau} =\frac {   |Y |^2 \tau }  {2  |z |^2 \tanh  2 \tau} \overline z  $$
 and  setting $z= |z | {\rm e}^{i  {\rm {arg}} (z)}$,
we readily gather that 
\beq\label{expestimate}
\biggl|\exp\biggl(-  \frac { |Y |^2 \tau }  {2 z \tanh  2 \tau} \biggr)\biggr| = \exp\biggl(\frac {-  |Y |^2 \tau }  {2  |z | \tanh  2 \tau}   \cos (  {\rm {arg}} (z))   \biggr)
\eeq
 and
\beq\label{dzexpestimate}
 \biggl|\partial_z   \exp\biggl( -  \frac { |Y |^2 |\tau| }  {2 z \tanh  2 \tau} \biggr) \biggr|= \frac { |Y |^2 \tau }  {2  |z |^2  \tanh  2 \tau}   \exp\biggl(\frac {-  |Y |^2 \tau }  {2  |z | \tanh  2 \tau}   \cos (  {\rm {arg}} (z))   \biggr)   \, \cdot
 \eeq
Along the same lines, one can easily check that 
$$ \biggl| \exp\biggl(\frac{i \tau s}{2z}\biggr)\biggr| =   \exp\biggl(\frac { \tau s}  {2|z | } \sin (  {\rm {arg}} (z))  \biggr) \, \virgp $$
 and
 $$\biggl|\partial_z  \exp\biggl(\frac{i \tau s}{2z}\biggr)\biggr| = \frac {|\tau| | s|}  {2 |z |^2 }  \, \exp\biggl(\frac { \tau s}  {2 |z | } \sin (  {\rm {arg}} (z))  \biggr) \cdot$$
We infer that    for all $\tau \in \R$,~$w=(Y,s) \in \H^d$ and all $z$ in $\C$ satisfying ${\rm Re}(z) \geq a >0 $, 
\beq\label{firstestimate}
\biggl|\exp\biggl( i \frac{\tau s}{2z}  -  \frac { |Y |^2 \tau }  {2 z \tanh  2 \tau} \biggr)\biggr| \leq  \exp\biggl(\frac {|\tau| | s|}  {2 |z | }  \biggr) 
\eeq
and
\beq\label{secondestimate}
 \biggl|\partial_z  \biggl(  \exp\biggl( i \frac{\tau s}{2z}  -  \frac { |Y |^2 \tau }  {2 z \tanh  2 \tau} \biggr) \biggr)\biggr| \leq   \exp\biggl(\frac {|\tau| | s|}  {2 |z | }  \biggr) \Big(\frac {1}  {a}  + \frac {|\tau| | s|}  { 2|z |^2 } \Big)  \, .
\eeq
  Fix $0< C < 4d$, then combining Formula\refeq{defH2} together with the Lebesgue derivation theorem, we deduce that the map~$z  \longmapsto    H^2_{z}$ is holomorphic on  \beq\label{defDs}
  \ds \wt D_{| s|}\eqdefa \Big\{z\in D,  |z |  > \frac  {| s|}  {C}  \Big\} \cdotp
\eeq

\medskip Since by  B. Gaveau's result (see Section~\ref{KH}), the maps  $H^1_z$ and $H^2_z$ coincide on the intersection of the real line with $\wt D_{| s|}$, we conclude that they also coincide on the whole domain $\wt D_{| s|}$.

\medskip Consider now  $(z_p)_{p \in \N}$   a sequence of elements of $\wt D_{| s|}$   which converges to~$-it$, with~$t \in \R^*$,  and let us   investigate~$\ds  \lim_{p \to \infty} \langle H^1_{z_p}, \!\varphi \rangle_{\cS'(\H^d)\times \cS(\H^d)}$ and~$\ds  \lim_{p \to \infty} \langle H^2_{z_p}, \!\varphi \rangle_{\cS'(\H^d)\times \cS(\H^d)}$,  for $\varphi$ in $\cS(\H^d)$.

\medskip
Let us start with $\ds \lim_{p \to \infty} \langle H^1_{z_p}, \!\varphi \rangle_{\cS'(\H^d)\times \cS(\H^d)}$.  By\refeq{defS'}, there holds   
 \beq
\label {computations1}
\langle H^1_{z_p}, \!\varphi \rangle_{\cS'(\H^d)\times \cS(\H^d)} =   \langle \cF_{\H} H^1_{z_p}, \!\theta \rangle_{\cS'(\wh \H^d)\times \cS(\wh \H^d)} \, ,
\eeq
with $\varphi (y,\eta,s)= c_d (\cF_{\H}^{-1}\theta)(y,-\eta,-s)$, for some constant $c_d.$ 
 
\medskip But according to\refeq{defH1}, 
$$
 \begin{aligned} \langle \cF_{\H} H^1_{z_p}, \theta \rangle_{\cS'(\wh \H^d)\times \cS(\wh \H^d)}&=  \int_{\wt \H^d} 
{\rm e}^{- 4 z_p |\lam| (2 |m|+d)} \delta_{n, m} \theta (\hat w) d\wh w \\ &= \sum_{m\in \N^{d} } \int_{\R} 
{\rm e}^{- 4 z_p |\lam| (2 |m|+d)} \theta (m, m, \lam) |\lam|^d d\lam 
\,  .
 \end{aligned} 
 $$
  Besides since~$\theta$ belongs to~$\cS(\wh \H^d)$,  it stems from\refeq{semipart}  that for any integer~$N$ there exists~$C_N$  such that for all~$\hat w=(m,m, \lam) \in \wt  \H^d$
$$|\theta (\hat w)| \leq C_N(1+  4|\lam|(2|m|+d))^{-N} \|\theta \|_{N,\cS(\wh\H^d)}\, .$$
Then performing  the change of variables  $\beta= \lam(2|m|+d)$ in each integral of the right-hand side of the above identity and choosing $N \geq d+2$, we get
\begin{multline*}  \sum_{m\in \N^{d} }  \int_{\R} \big |
{\rm e}^{- 4 z_p |\lam| (2 |m|+d)}  \,\theta(m, m, \lam) \big |\, |\lam|^d d\lam \\ \leq C_N\|\theta \|_{N,\cS(\wh\H^d)}\sum_{m\in \N^{d} }  \frac 1 {(2|m|+d)^{d+1}}\int_{\R} (1+  4|\beta|)^{-N} |\beta|^d d\beta < \infty \, .\end{multline*}
Applying  Lebesgue's dominated convergence theorem,   we infer  that\footnote{Let us underline that Formula\refeq{computations1} holds true for any sequence $(z_p)_{p \in \N}$     of $D$  which converges to~$-it$, with~$t \in \R^*$.}
 \beq
\label {computationslim11} \lim_{p \to \infty} \langle H^1_{z_p}, \!\varphi \rangle_{\cS'(\H^d)\times \cS(\H^d)} = \langle H^1_{-it}, \!\varphi \rangle_{\cS'(\H^d)\times \cS(\H^d)} = \langle S_t, \!\varphi \rangle_{\cS'(\H^d)\times \cS(\H^d)} \, .\eeq   In order to deal with $H^2_{z_p}$,     recall that by definition 
$$
  H^2_{z_p} (Y,s)=  \frac {1}  {(4\pi z_p)^{\frac Q 2} }    \int_{\R}  \biggl(\frac {2 \tau}  {\sinh 2 \tau}\biggr)^{d} \, \exp\biggl( i \frac{\tau s}{2z_p}  -  \frac { |Y |^2 \tau}  {2 z_p \tanh  2 \tau} \biggr) d\tau  \, .
$$
 By hypothesis, $(z_p)_{p \in \N}$ is a sequence of $D$ satisfying  $\ds  |z_p |>\frac  {|s |}  {C}  \virgp$ with $0< C < 4d$,  and this implies that  
 $$  \int_{\R}  \Big |\Big(\frac {2 \tau}  {\sinh 2 \tau}\biggr)^{d} \exp\biggl( i \frac{\tau s}{2z_p}  -  \frac { |Y |^2 \tau}  {2 z_p \tanh  2 \tau} \Big)  \Big|d\tau \leq \int_{\R} \Big(\frac {2 \tau}  {\sinh 2 \tau}\biggr)^{d} \exp\Big( \frac  {C |\tau| } 2\Big)d\tau < \infty \, .$$ 
We deduce  that for all $w=(Y,s)$ satisfying $|s |< 4d|t |$, there holds 
 $$\lim_{p \to \infty}   H^2_{z_p}(Y,s)= H^2_{-it}(Y,s) = \frac {1}  {(-4i \pi t)^{\frac Q2} } \int_{\R}  \Big(\frac {2 \tau}  {\sinh 2 \tau}\Big)^{d} \exp\Big( - \frac{\tau s}{2t}  - i \frac {|Y|^2\tau}  {2t \tanh  2 \tau} \Big) d\tau\, ,$$
which of course ensures that, for all $\varphi$ in $\cS(\H^d)$, we have $$\ds  \lim_{p \to \infty} \langle H^2_{z_p}, \!\varphi \rangle_{\cS'(\H^d)\times \cS(\H^d)}=\langle H^2_{-it}, \!\varphi \rangle_{\cS'(\H^d)\times \cS(\H^d)}\, .$$ This ends  the proof of Theorem\refer{STth}. \qed

%%%%%%%%%%%%

\section {Proof of the local dispersive and Strichartz estimates}
\label {proofmain1}
\subsection{Proof of Theorem~\ref{mainth1}}
Since the linear Schr\"odinger  equation  on $\H ^d$ is invariant by left translation, it suffices to prove the result for~$w_0 = 0$.
Let $u_0$ be a function  in  $\cD(B_\H(0,  R_0))$.
Then  invoking Theorem\refer{STth}, we infer that  the solution to  the Cauchy problem~$(S_\H)$   assumes the form 
\begin{equation}\label{solution Schrodinger}
 u(t, \cdot)= u_0 \star S_t \, ,
 \end{equation}
where  
\begin{equation}\label{kernelSchrodinger}
S_t(Y,s)=\frac {1}  {(-4i \pi t)^{\frac Q 2} } \int_{\R}  \biggl(\frac {2 \tau}  {\sinh 2 \tau}\biggr)^{d} \, \exp\biggl( - \frac{\tau s}{2t}  - i \frac {|Y|^2\tau}  {2t \tanh  2 \tau} \biggr) d\tau\,  
 \end{equation} 
  on any Heisenberg  ball $B_\H(0, \kappa\sqrt{|t |})$, with $\kappa< \sqrt{4d}$, since
  $$
  \rho_\H(w) \leq  \kappa\sqrt{|t|} \Longrightarrow |s|\leq  \kappa^2 |t| < 4d |t| \, .
    $$
  Note that
  \begin{equation} \label{Linfty} 
\|S_t\|_{L^\infty(B_\H(0, \kappa\sqrt{|t |}))} \leq  \frac {1}  {(4 \pi |t |)^{\frac Q 2} } \int_{\R} \Big(\frac {2 \tau}  {\sinh 2 \tau}\Big)^{d} \exp\Big(\frac { \kappa^2|\tau|} 2\Big)d\tau  \eqdefa   \frac {M_\kappa}  {|t |^{\frac Q 2} }\,  \cdot 
\end{equation}
\medskip But by definition of the convolution product on $\H^d$, we have
\begin{equation}\label{convolution Schrodinger}
(u_0 \star S_t) (w)  
= \int_{\H^d} u_0 ( v ) S_t ( v^{-1} \cdot w)\, dv\, .
 \end{equation} 
 Thanks to  the triangle inequality, we have   for any  $w$ in $B_\H(0, \kappa\sqrt{|t |})$  and any $v$ in $B_\H(0,  R_0)$  
 $$
 \rho(v^{-1} \cdot w) = d_\H (w,v) \leq \rho(w) + \rho(v) \leq \kappa\sqrt{|t |} + R_0  <    \sqrt{4d|t |} \, ,
 $$
provided that $$ |t | > T_{\kappa,R_0}= \Big(\frac {R_0} {\sqrt{4d}-\kappa} \Big)^2 \virgp
$$
  which  by    Young's inequality completes the proof of Estimate\refeq{dispSHloc}   in the case~$p=\infty$.  

\medskip

Furthermore by the  conservation of the mass there holds
$$
\|u(t,\cdot)\|_{L^2(B_\H(0, \kappa\sqrt{|t |}))} \leq \|u(t, \cdot)\|_{L^2(\H^{d})} = \|u_0\|_{L^2(\H^{d})}.
$$
So resorting to a real  interpolation argument, we get  for all~$2 \leq p \leq \infty$ and any~$|t | \geq T_{R_0,\kappa}$
$$
\|u(t,\cdot)\|_{L^p(B_\H(0, \kappa\sqrt{|t |}))} \leq \left( \frac {M_\kappa}  {|t |^{\frac Q 2} }\right)^ {1- \frac 2 p}   \|u_0\|_{L^{p'}(\H^{d})}\, ,
$$
 where $p'$ denotes the conjugate exponent of $p$. This ends the proof of the result.  
\qed

\subsection{Proof of Theorem\refer{STinvth}}
As already mentioned,  the Strichartz estimates are straightforward from the dispersive estimates.  Actually, Theorem\refer{STinvth} readily follows from  the  mass conservation and   the following proposition, which can be seen as a corollary of Theorem~{\rm\ref{mainth1}}. 
   \begin{proposition}
\label{mainth1st+}
{\sl  Under the assumptions of Theorem~{\rm\ref{mainth1}},  the solution to the Cauchy problem~$(S_\H)$ associated to $u_0$ satisfies, for all $2 \leq p \leq \infty$ and all $q$ such that $\ds \frac 1  q + \frac Q  p < \frac Q  2\virgp$ 
 
$$ \|u\|_{L^q(]-\infty, - C_\kappa R_0^2]  \cup [C_\kappa R_0^2, \infty[; L^p(B_\H(w_0, \kappa\sqrt{|t |})))}  \leq
 C(q,\kappa) {R_0^{-Q(\frac  1 2 - \frac 1 p)+ \frac 2 q}}    \|u_0\|_{L^{2}(\H^{d})}\, .
  $$
}
\end{proposition}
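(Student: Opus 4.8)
The plan is to derive Proposition~\ref{mainth1st+} directly from the local dispersive estimate~\refeq{dispSHloc} of Theorem~\ref{mainth1} together with the mass conservation identity, by a standard $TT^\star$-type interpolation argument adapted to the time-dependent radius of the balls. First I would record the two basic ingredients: on the one hand, Theorem~\ref{mainth1} gives, for $|t|\geq T_{\kappa,R_0}$ and all $2\leq p\leq\infty$,
\beq\label{disprecall}
\|u(t,\cdot)\|_{L^p(B_\H(w_0,\kappa\sqrt{|t|}))}\leq \Big(\frac{M_\kappa}{|t|^{Q/2}}\Big)^{1-\frac2p}\|u_0\|_{L^{p'}(\H^d)}\,;
\eeq
on the other hand, since $\big(\mathcal U(t)\big)_{t\in\R}$ is a one-parameter group of unitary operators on $L^2(\H^d)$, one has $\|u(t,\cdot)\|_{L^2(\H^d)}=\|u_0\|_{L^2(\H^d)}$ for every $t$. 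I would first note that $T_{\kappa,R_0}=\big(R_0/(\sqrt{4d}-\kappa)\big)^2=C_\kappa R_0^2$, so that the time set $]-\infty,-C_\kappa R_0^2]\cup[C_\kappa R_0^2,+\infty[$ appearing in the statement is precisely the set of times on which~\refeq{disprecall} is valid.

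Next I would combine~\refeq{disprecall} with the $L^2$ bound. Since $u_0$ is supported in $B_\H(w_0,R_0)$, H\"older's inequality on that ball gives $\|u_0\|_{L^{p'}(\H^d)}=\|u_0\|_{L^{p'}(B_\H(w_0,R_0))}\lesssim R_0^{Q(\frac1{p'}-\frac12)}\|u_0\|_{L^2(\H^d)}=R_0^{-Q(\frac12-\frac1p)}\|u_0\|_{L^2(\H^d)}$, where we used that the Haar measure of $B_\H(w_0,R_0)$ is comparable to $R_0^{Q}$ (homogeneity of the Kor\'anyi gauge under the dilations~\refeq{defdilation}). Substituting into~\refeq{disprecall}, for $|t|\geq C_\kappa R_0^2$ we obtain
\beq\label{pointwiseLp}
\|u(t,\cdot)\|_{L^p(B_\H(w_0,\kappa\sqrt{|t|}))}\lesssim M_\kappa^{1-\frac2p}\,|t|^{-\frac Q2(1-\frac2p)}\,R_0^{-Q(\frac12-\frac1p)}\,\|u_0\|_{L^2(\H^d)}\,.
\eeq
Now I would take the $L^q$ norm in time over the set $E_{R_0}\eqdefa \{|t|\geq C_\kappa R_0^2\}$. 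The time integral $\int_{E_{R_0}}|t|^{-\frac{qQ}2(1-\frac2p)}\,dt$ converges precisely when $\frac{qQ}2(1-\frac2p)>1$, i.e.\ when $\frac1q+\frac Qp<\frac Q2$, which is exactly the hypothesis of Proposition~\ref{mainth1st+}; and an elementary computation gives $\int_{E_{R_0}}|t|^{-\frac{qQ}2(1-\frac2p)}\,dt = c_{p,q}\,(C_\kappa R_0^2)^{1-\frac{qQ}2(1-\frac2p)}$ for a finite constant $c_{p,q}>0$. Raising to the power $1/q$, the power of $R_0$ coming from this factor is $\frac2q-Q(1-\frac2p)$, and adding the exponent $-Q(\frac12-\frac1p)$ from~\refeq{pointwiseLp} yields a total $R_0$-power of $\frac2q-Q(1-\frac2p)-Q(\frac12-\frac1p)=\frac2q-Q(\frac12-\frac1p)\cdot 3$... \emph{careful bookkeeping is needed here}: in fact $-Q(1-\frac2p)-Q(\frac12-\frac1p) = -Q(\tfrac32-\tfrac3p)$, which does \textbf{not} match the claimed exponent $-Q(\frac12-\frac1p)+\frac2q$. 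The correct route is to \emph{not} throw away the $|t|^{1/2}$-growth of the ball radius cheaply but rather to rescale: set $t=C_\kappa R_0^2\,\sigma$, and also use the dilation invariance noted after Theorem~\ref{STinvth} (the scaling $u(t,w)\mapsto u(\lambda^2 t,\delta_\lambda w)$) to reduce to $R_0=1$ from the outset; then the general-$R_0$ estimate follows by undoing the scaling, and one only has to track how $\|u_0\|_{L^2}$, the space-time norm, and the time cut-off transform under $\delta_\lambda$. This rescaling argument automatically produces the homogeneous weight $R_0^{-Q(\frac12-\frac1p)+\frac2q}$ without any delicate cancellation.

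Concretely, I would argue as follows. For $\lambda>0$ let $u_0^{(\lambda)}(w)\eqdefa u_0(\delta_\lambda w)$; then the corresponding solution is $u^{(\lambda)}(t,w)=u(\lambda^2 t,\delta_\lambda w)$, and if $u_0$ is supported in $B_\H(w_0,R_0)$ then $u_0^{(\lambda)}$ is supported in $B_\H(\delta_{1/\lambda}w_0,R_0/\lambda)$; choosing $\lambda=R_0$ we reduce to data supported in a ball of radius $1$ (centered at some point, translation-invariance of $(S_\H)$ allowing us to further assume $w_0=0$). For $R_0=1$ the desired estimate is: for $\frac1q+\frac Qp<\frac Q2$,
\beq\label{R01case}
\|u\|_{L^q(\{|t|\geq C_\kappa\};\,L^p(B_\H(0,\kappa\sqrt{|t|})))}\leq C(q,\kappa)\|u_0\|_{L^2(\H^d)}\,,
\eeq
and this follows immediately from~\refeq{pointwiseLp} (with $R_0=1$) by taking the $L^q$-in-time norm, the integral converging exactly under the stated condition. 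Finally, I would undo the scaling: one checks using~\refeq{defdilation} that $\|u^{(\lambda)}\|_{L^q_t L^p_x}$ and $\|u_0^{(\lambda)}\|_{L^2}$ pick up explicit powers of $\lambda=R_0$ — namely $\|u_0^{(\lambda)}\|_{L^2(\H^d)}=\lambda^{-Q/2}\|u_0\|_{L^2(\H^d)}$, the spatial $L^p$ norm over the dilated ball scales by $\lambda^{-Q/p}$, the Jacobian $d(\lambda^2 t)=\lambda^2\,dt$ contributes $\lambda^{2/q}$, and the ball $B_\H(0,\kappa\sqrt{|t|})$ transforms correctly because $\delta_\lambda$ maps $B_\H(0,r)$ to $B_\H(0,\lambda r)$ and $\kappa\sqrt{\lambda^2|t|}=\lambda\,\kappa\sqrt{|t|}$ — and collecting these powers reproduces exactly the weight $R_0^{-Q(\frac12-\frac1p)+\frac2q}$; likewise the time cut-off $|t|\geq C_\kappa$ for $u^{(\lambda)}$ becomes $|t|\geq C_\kappa R_0^2$ for $u$, matching the interval in the statement. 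Theorem~\ref{STinvth} then follows by specializing to the admissible set $\cA$, where $\frac2q+\frac Qp=\frac Q2$ forces $\frac1q+\frac Qp<\frac Q2$ (strictly, since $q<\infty$ on $\cA$ when $p>2$, and the endpoint $p=q=\infty$ is handled directly by~\refeq{disprecall}), the $R_0$-power in the admissible case reducing to $0$ so that the constant is indeed independent of $R_0$.

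The main obstacle, as flagged above, is purely bookkeeping: getting the exact power of $R_0$ right. Doing the time integral ``by hand'' at general $R_0$ is error-prone because the radius of the spatial ball itself grows like $\sqrt{|t|}$, coupling the spatial and temporal scalings; the clean way around this is to exploit the built-in dilation invariance of $(S_\H)$ and of the estimate to reduce to $R_0=1$, prove the estimate there with no weight, and then restore the weight by a single explicit scaling computation. Once that is organized, every remaining step is a routine application of Theorem~\ref{mainth1}, H\"older's inequality on a ball of finite measure, and convergence of a one-dimensional power integral under the stated subcriticality condition $\frac1q+\frac Qp<\frac Q2$.
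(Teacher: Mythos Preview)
Your scaling detour is correct and does reproduce the right power of $R_0$, but it is unnecessary: the direct computation you first attempted already works, and is exactly what the paper does. The reason you thought it failed is a sign slip in the H\"older step. You wrote
\[
\|u_0\|_{L^{p'}}\lesssim R_0^{Q(\frac1{p'}-\frac12)}\|u_0\|_{L^2}=R_0^{-Q(\frac12-\frac1p)}\|u_0\|_{L^2}\,,
\]
but $\frac1{p'}-\frac12=1-\frac1p-\frac12=\frac12-\frac1p$, which is \emph{positive} for $p\geq 2$; so the exponent is $+Q(\frac12-\frac1p)$, not $-Q(\frac12-\frac1p)$. With the correct sign, your pointwise bound becomes
\[
\|u(t,\cdot)\|_{L^p(B_\H(w_0,\kappa\sqrt{|t|}))}\leq C(\kappa)\,\frac{R_0^{Q(\frac12-\frac1p)}}{|t|^{\frac Q2-\frac Qp}}\,\|u_0\|_{L^2(\H^d)}\,,
\]
which is precisely the paper's intermediate estimate~\eqref{dispSHlocl2}. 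Taking the $L^q$ norm over $\{|t|\geq C_\kappa R_0^2\}$ then gives an extra factor $R_0^{\,\frac2q-2(\frac Q2-\frac Qp)}$, and
\[
Q\Big(\tfrac12-\tfrac1p\Big)+\tfrac2q-2Q\Big(\tfrac12-\tfrac1p\Big)=\tfrac2q-Q\Big(\tfrac12-\tfrac1p\Big)\,,
\]
matching the statement directly---no scaling needed. Your remark that one must ``not throw away the $|t|^{1/2}$-growth of the ball radius cheaply'' is a red herring: the dispersive estimate~\eqref{dispSHloc} already controls the $L^p$ norm on the growing ball, so nothing is being thrown away. In short, the paper's proof is the two-line route (H\"older on the support of $u_0$, then integrate in time); your proposal recovers the same result by a longer but valid dilation argument after being derailed by a sign error.
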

\begin{proof}
Since $u_0$ is  in supported in $B_\H(w_0, R_0)$, combining  the H\"older inequality   with\refeq{dispSHloc}, we infer that, for all $2 \leq p \leq \infty$,  \beq
\label {dispSHlocl2}
\|u(t,\cdot)\|_{L^p(B_\H(w_0, \kappa\sqrt{|t |}))} \leq C(\kappa)
\frac {R_0^{Q(\frac  1 2 - \frac 1 p)}}  {|t |^{  \frac Q 2 - \frac Q p } }  \|u_0\|_{L^{2}(\H^{d})}\, \virgp
\eeq for all $|t | \geq T_{\kappa, R_0}$. 
The proposition follows after time integration, and Theorem \refer{STinvth} is a direct consequence.
\end{proof}
   
\section {Concentration properties of the Schr\"odinger kernel  on~$\H^d$}\label {kerneldirac}
\subsection{Proof of Theorem\refer{STthdirac}} The proof  relies on Fourier-Heisenberg analysis recalled in Section\refer{fouriertransf}.   
 \medskip 
 
Denote  $w_t=(0,  -4dt)$. Then,    making  use of  Formula\refeq{solSHkernel} and recalling that $S_t$ is even,  we get, for any  $u_0 \in \cS(\H^d)$,  
\begin{equation}\label{kernelSchrodingerconvsol} (u_0 \star S_t)(w_t)=\langle S_t,   u_0  \circ\tau_{w^{-1}_t}\rangle_{\cS'(\H^d)\times \cS(\H^d)} = \sum_{n\in \N^d}\int_{\R}{\rm e}^{4 i t |\lam|(2|n|+d)}   \theta_t (n,n,\lam)|\lam|^dd\lam\,  ,\end{equation}where $\tau_{w^{-1}_t}$ is the  left translation operator by $w^{-1}_t$ defined by\refeq{definlefttranslate} and   \beq\label{reltheta}
   {}^t\cF_{\H}\theta_t(y,\eta,s) = \frac {\pi^{d+1}} {2^{d-1}} (\cF_{\H}^{-1}\theta_t)(y,-\eta,-s)=  u_0 \circ\tau_{w^{-1}_t}\, . 
\eeq 
Notice that, for all $f  \in L^1(\H^d)$ and all $g_0=(0, s_0)$, we have
$$
\cF_\H (f\circ \tau_{g^{-1}_0}) (n, m, \lam) =   {\rm e}^{-is_0\lam} (\cF_\H f)(n, m, \lam) \, .
$$
It then follows  that
$$
\cF_{\H}(u_0 \circ\tau_{w^{-1}_t})(n, m, \lam)  = {\rm e}^{4idt\lam} \cF_\H (u_0) (n, m, \lam)    \, .
$$
Combining\refeq{defF}-\eqref{def} together with\refeq{reltheta}, we deduce that
$$ \theta_t (n, m, \lam) = \frac  {2^{d-1}} {\pi^{d+1}}  {\rm e}^{- 4idt\lam}\cF_\H  (u_{0} )(m, n, -\lam)  \, .$$
Consequently, we have 
$$ \begin{aligned} (u_0 \star S_t)(w_t) = \frac  {2^{d-1}} {\pi^{d+1}} \sum_{n\in \N^d}\int_{\R}{\rm e}^{4idt\lam} {\rm e}^{4 i t |\lam|(2|n|+d)}    \cF_\H   (u_{0} )(n, n, \lam)|\lam|^dd\lam \, .\end{aligned}$$
In particular, if we consider $u_0$ so that 
$$\cF_\H   (u_0)(n, n, \lam)=\cF_\H   (u_0)(n, n, \lam) \delta_{n, 0}{\bf 1}_{\lam <0}  \, ,
$$
we obtain, thanks to\refeq{DiracF},
$$ \begin{aligned} (u_0 \star S_t)(w_t) & = \langle \d_0, u_0\rangle_{\cS'(\H^d)\times \cS(\H^d)} \, .\end{aligned}$$ 
Theorem\refer{STthdirac} follows. \qed

\subsection{Proof of Theorem\refer{better}}
Let~$\ell \geq 1$ be a fixed integer.  We revisit the proof of Theorem~\ref{STth}: recall that the restriction over~$s$
comes from the fact that the function~$H^2_z(Y,s)$ given by\refeq{defH2} is holomorphic only on the set~$\ds  |z |  >    {| s|} / ({4d})$. 
Now let us define
$$
\mathscr{V}_\ell (\H^d)\eqdefa\oplus_{|m|\geq \ell}  L_m^2(\H^d)
$$
where~$L^2_m$ is defined in~(\ref{defL2m})-(\ref{l2m}). In the case when the Cauchy data~$u_0$ belongs to the set~$\cS(\H^d) \cap \mathscr{V}_\ell (\H^d)$,
our goal is to write
$$
u(t)= u_0 \star   S^{(\ell)}_t
$$
where~$S^{(\ell)}_t$ is  a tempered distribution obtained, for~$|s|<4(2\ell+ d)|t|$, by the same complex analysis argument 
as in the proof of Theorem~\ref{STth}, where the    function to be analyzed,  arising from the heat equation, is now
 $$h^{(\ell)}_t(Y,s)\eqdefa
  \frac {2^{d-1}}  {\pi^{d+1} }  \sum_{|m|\geq \ell} \int_{\R\times \R^{d}} 
{\rm e}^{is\lam+2i\lam \langle \eta,z\rangle} {\rm e}^{- 4 |\lam| (2 |m|+d)t}
H_{m,\lam} (y+z)
H_{m,\lam} (-y+z) \,dz |\lam|^d d\lam\,.$$
According to Gaveau's resulted recalled in Theorem\refer{Heatkernel}, the function $h^{(\ell)}_t$ also reads $$
  \begin{aligned}  
&  h^{(\ell)}_t(Y,s)= \frac {1}  {(4\pi t)^{\frac Q 2} } \int_{\R}  \biggl(\frac {2 \tau}  {\sinh 2 \tau}\biggr)^{d} \, {\rm e}^{ i \frac{\tau s}{2t}   -  \frac {|Y |^2\tau}  {2 t \tanh  2 \tau} } d\tau
\\
& \quad -  \frac {2^{d-1}}  {\pi^{d+1} }  \sum_{|m|\leq \ell-1} \int_{\R\times \R^{d}} 
{\rm e}^{is\lam+2i\lam \langle \eta,z\rangle} {\rm e}^{- 4 |\lam| (2 |m|+d)t}
H_{m,\lam} (y+z)
H_{m,\lam} (-y+z) \,dz |\lam|^d d\lam\,.
\end{aligned} 
$$
But in view of\refeq{princident},  we have, for any integer $k$,
$$
  \begin{aligned}  
&    \frac {2^{d-1}}  {\pi^{d+1} }  \sum_{|m|=k} \int_{\R\times \R^{d}} 
{\rm e}^{is\lam+2i\lam \langle \eta,z\rangle} {\rm e}^{- 4 |\lam| (2 |m|+d)t}
H_{m,\lam} (y+z)
H_{m,\lam} (-y+z) \,dz |\lam|^d d\lam\\
&=   \frac {2^{d-1}}  {\pi^{d+1} } \int_{\R} 
{\rm e}^{is\lam} {\rm e}^{- 4 |\lam| (2 k+d)t}
{\rm e}^{- |\lam||Y|^2 } L_k^{(d-1)} ( 2  |\lam|  |Y|^2)|\lam|^d d\lam\,,
\end{aligned} 
$$
where  $L_k^{(d-1)}$ denotes   the  Laguerre polynomial of order $k$ and type $d-1$. Then,   performing    the change   of variables $\ds \lam= \frac \tau {2t}\virgp$  we readily gather that 
$$
  \begin{aligned}  
  h^{(\ell)}_t(Y,s)&= \frac {1}  {(4\pi t)^{\frac Q 2} } \int_{\R}  \biggl(\frac {2 \tau}  {\sinh 2 \tau}\biggr)^{d} \, {\rm e}^{ i \frac{\tau s}{2t}   -  \frac {|Y |^2\tau}  {2 t \tanh  2 \tau}} d\tau
\\
& \quad - \frac {1}  {(4\pi t)^{\frac Q 2} }  \sum_{k\leq \ell-1} \int_{\R} \big(4 |\tau|\big)^d {\rm e}^{- 2 |\tau| (2k+d) }
L_k^{(d-1)} \Big(\frac {|Y |^2|\tau|}  { t } \Big) {\rm e}^{  i \frac{\tau s}{2t}   -  \frac {|Y |^2|\tau|}  {2 t } }  d\tau\,.
\end{aligned} 
$$
Returning again to the strategy of the proof of Theorem~\ref{STth},
our first aim is therefore to prove that the maps
$$z    \longmapsto   H^{(\ell), 1}_z(Y,s)  \andf z    \longmapsto   H^{(\ell), 2}_{z}(Y,s) $$
with 
$$\ds  H^{(\ell), 1}_z(Y,s) \eqdefa \ds \frac {2^{d-1}}  {\pi^{d+1} }  \sum_{|m|\geq \ell} \int_{\R} 
{\rm e}^{i s \lam} {\rm e}^{- 4 z |\lam| (2 |m|+d)} \cW\big((m,m,\lam),Y\big) |\lam|^d d\lam 
$$
and
\beq\label{defH2l}
 \begin{aligned}  
H^{(\ell), 2}_z(Y,s)&\eqdefa \frac {1}  {(4\pi z)^{\frac Q 2} } \int_{\R} (2 |\tau|)^d {\rm e}^{ i \frac{\tau s}{2z}   }\biggl(  \frac {1}  {(\sinh 2 |\tau|)^d }  \, {\rm e}^{   -  \frac {|Y |^2 \tau}  {2z \tanh  2  \tau}}  \\
& \qquad\qquad\qquad -  \sum_{k\leq \ell-1}  2^d {\rm e}^{- 2 |\tau| (2k+d) }
L_k^{(d-1)} \Big(\frac {|Y |^2|\tau|}  { z } \Big) {\rm e}^{   -  \frac {|Y |^2|\tau|}  {2 z } }  \biggr)d\tau 
\end{aligned} 
\eeq
are,  for   all $(Y,s)$ in $\H^d$, holomorphic on a suitable domain of $\C^*$. The same reasoning  as in the proof of  Theorem~\ref{STth}  enables to check  that the function $H^{(\ell), 1}_z$  is holomorphic on the domain~$\ds D=\big\{z\in\C, {\rm Re}(z)  >0  \big\}$ so now we  concentrate on~$H^{(\ell), 2}_z$.
\medskip

We shall prove  that the function~$H^{(\ell), 2}_{z}$    is holomorphic on the domain
 \beq\label{defDsl}
  \ds \wt D^ \ell_{| s|}\eqdefa  \Big\{z\in D,  |z |  > \frac  {| s|}  {4(2\ell+d)}  \Big\} \, \cdotp
\eeq
Let us start by re-writing~(\ref{defH2l}) in the following form:
\beq\label{defH2bis}
  \begin{aligned}  
H^{(\ell), 2}_z(Y,s)&= \frac {1}  {(4\pi z)^{\frac Q 2} } \int_{\R}  (4|\tau|)^d{\rm e}^{ i \frac{\tau s}{2z} -2|\tau|d}\, 
     \Phi_\ell\Big( \frac {|Y |^2|\tau|}  { z }\virgp \, {\rm e}^{-4|\tau|}\Big)  \, d\tau \, , \quad \mbox{with}\\
  \Phi_\ell(x,r)& \eqdefa  {\rm e}^{ -\frac x2} \Big(\frac{{\rm e}^{- \frac {  r  x }  { 1-  r}} }{(1-  r)^d}- \sum_{k\leq \ell-1}r^{k }L_k^{(d-1)}(x)\Big)
   \, .
\end{aligned} 
\eeq
From now on we set\footnote{It will be useful to point out that $\ds  |x| = \frac {|Y |^2|\tau|}  { |z| } $ and $\ds {\rm Re}(x) = \frac {|Y |^2|\tau|}  { |z| } \cos (  {\rm {arg}} (z))$. }
\beq\label{defrx}
 x \eqdefa \frac {|Y |^2|\tau|}  { z } \,,     \quad r \eqdefa {\rm e}^{-4|\tau|} \, . 
\eeq
Recalling that   the generating function for the  Laguerre polynomials  is  given by  (see for instance\ccite{askey, Tricomi, farautharzallah, gradshteyn, lebedev, O}) 
\beq\label{genf}
\sum_{k\geq 0} 
 r^{k} L_k^{(d-1)}(x)=\frac{{\rm e}^{- \frac {  r  x }  { 1-  r}} }{(1-  r)^d}\, \virgp \quad   |r|< 1
\, ,
\eeq 
we notice
 that the~${\rm e}^{ \frac x2} \Phi_\ell(x,r)$  is nothing else than the remainder of the Taylor expansion of the function~$\ds f(x, r)\eqdefa \frac{{\rm e}^{- \frac {  r  x }  { 1-  r}} }{(1-  r)^d}$ at order~$\ell-1$, near~$r=0$. We shall therefore  argue differently depending on whether~$r$ is close to~$1$ or not.   So let us fix~$\tau_0>0$, and   start by analyzing the case when~$|\tau| \geq \tau_0 $, since this implies that~$r\leq r_0\eqdefa {\rm e}^{-4\tau_0}<1$. The case~$|\tau| \leq  \tau_0$ will be dealt with further down.
Considering
\beq\label{defHtau0}
H^{(\ell), 2}_{z,\tau_0}(Y,s)\eqdefa \frac {1}  {(4\pi z)^{\frac Q 2} } \int_{|\tau| \geq \tau_0 } 
(4 |\tau|)^d {\rm e}^{ i \frac{\tau s}{2z}  -2|\tau|d } \,    \Phi_\ell \Big(\frac {|Y |^2|\tau|}  {  z }\virgp  \,{\rm e}^{- 4 |\tau|}\Big) d\tau  \, ,
\eeq
we are thus reduced to investigating, for~$|\tau| \geq \tau_0$,  the function
\beq\label{newfGint}
\big(4 |\tau|\big)^d {\rm e}^{- 2 |\tau| d } {\rm e}^{i \frac{\tau s}{2z}} \,   \Phi_\ell \Big(\frac {|Y |^2|\tau|}  {  z }\virgp  \,{\rm e}^{- 4 |\tau|}\Big) 
\eeq   
 and its derivative with respect to $z$. We shall actually restrict~$z$ to    the domain
 $$
 \begin{aligned}  
\ds D^\ell _{| s|, a,A } &\eqdefa \Big\{z\in \C\, ,\,    |z |  > \frac  {| s|}  {\kappa (2\ell+d)}  \Big\} \cap   \Omega_{a, A}\, ,  \,  \mbox{with}  \\
   \Omega_{a, A} &\eqdefa \big\{z\in\C, {\rm Re}(z)  >a \, , \,  |z| \leq A  \big\} \, , 
 \end{aligned} 
$$
where~$0<\kappa<4$ and  $a,A>0$ are arbitrary fixed constants.

Recalling~$\ds f(x, r)\eqdefa \frac{{\rm e}^{- \frac {  r  x }  { 1-  r}} }{(1-  r)^d}$ and applying Taylor's formula, it readily follows from\refeq{genf} that 
$$
\Phi_\ell(x,r)=  {\rm e}^{-\frac x 2}  \frac{r^\ell  }{(\ell-1)!}  \int_0^1  (1-s)^{\ell-1}  (\partial_r^\ell f) (x, rs) \, ds \, .
$$
 After some computations we infer that  there is a positive constant $C(\ell, \tau_0)$ such that, for all~$r\leq r_0={\rm e}^{-4\tau_0}$, we have 
\beq\label{newfGintbis}  \begin{aligned}  
 \big|\Phi_\ell(x,r)\big|  &\leq C(\ell, \tau_0)   \, r^\ell  \,  {\rm e}^{-\frac {{\rm Re}(x)} 2}  (1+|x|)^\ell  \andf  \\
   \big|\partial_x \Phi_\ell(x,r)\big|   &\leq C(\ell, \tau_0)   \, r^\ell  \,  {\rm e}^{-\frac {{\rm Re}(x)} 2}  (1+|x|)^{\ell } \, . \end{aligned}  
\eeq
Observing that 
\beq\label{estimatesmodulus}
  \big|{\rm e}^{   -  \frac {|Y |^2|\tau|}  {2 z } } \big| = {\rm e}^{   -  \frac {|Y |^2|\tau|}  {2 |z| } \cos (  {\rm {arg}} (z))}\, ,
  \eeq  with $\ds  \inf_{z \in \Omega_{a,A}} \cos ( {\rm {arg}} (z))  \geq \alpha(a,A) >0$, we deduce that there is a positive constant~$C(\ell, \tau_0, a, A)$ such that, for all $(Y,s)$ in~$\H^d$,~$\ds z \in D^\ell _{| s|, a,A }$ and~$|\tau| \geq  \tau_0$,  there holds 
  $$ 
  \begin{aligned}  
  \Big|(4|\tau|)^d{\rm e}^{ i \frac{\tau s}{2z} -2|\tau|d}\Phi_\ell  \Big(\frac {|Y |^2|\tau|}  { z }
\virgp \, {\rm e}^{- 4 |\tau|}\Big) \Big
  |  
       &   \leq C(\ell, \tau_0, a, A) |\tau|^ {d}    \, {\rm e}^{ \frac {-(4 -\kappa) (2\ell+d) |\tau|  }  {2  }  } 
\end{aligned}   $$
and similarly
$$
 \Big|\partial_z  \Big(4|\tau|)^d{\rm e}^{ i \frac{\tau s}{2z} -2|\tau|d}\Phi_\ell  \Big(\frac {|Y |^2|\tau|}  { z }
\virgp \, {\rm e}^{- 4 |\tau|}\Big)  \Big| \leq   C(\ell, \tau_0, a, A) |\tau|^ {d}    \, {\rm e}^{ \frac {-(4 -\kappa) (2\ell+d) |\tau|  }  {2  }  }.
$$
This readily ensures   that the function~$H^{(\ell), 2}_{z,\tau_0}$ 
is holomorphic on the domain~$\ds \wt D^ \ell_{| s|}$ defined in~(\ref{defDsl}).

\medskip

To deal with~$H^{(\ell), 2}_{z}-H^{(\ell), 2}_{z,\tau_0}$, let us first observe  that according to\refeq{defH2l}, we have 
$$ \begin{aligned}  
H^{(\ell), 2}_{z,\tau_0}(Y,s)&= \frac {1}  {(4\pi z)^{\frac Q 2} } \int_{|\tau| \leq \tau_0 } (2 |\tau|)^d {\rm e}^{ i \frac{\tau s}{2z}   }\biggl(  \frac {1}  {(\sinh 2 |\tau|)^d }  \, {\rm e}^{   -  \frac {|Y |^2 \tau}  {2z \tanh  2  \tau}}  \\
& \qquad\qquad\qquad -  \sum_{k\leq \ell-1}  2^d {\rm e}^{- 2 |\tau| (2k+d) }
L_k^{(d-1)} \Big(\frac {|Y |^2|\tau|}  { z } \Big) {\rm e}^{   -  \frac {|Y |^2|\tau|}  {2 z } }  \biggr)d\tau \, .
\end{aligned} 
$$
Then invoking\refeq{estimatesmodulus}, 
 we infer  that for any integer~$\ell$
and  all positive real numbers~$a$ and~$A$, there exists a positive constant~$C(\ell, a, A)$ such that  
$$ \begin{aligned}  
 \supetage {z \in \Omega_{a,A}} {k \leq \ell-1}  \Big| L_k^{(d-1)} \Big(\frac {|Y |^2|\tau|}  { z } \Big){\rm e}^{-\frac {|Y |^2|\tau|}  { 2z }}\Big|  &\leq C(\ell, a, A)    \andf 
  \\
\supetage {z \in \Omega_{a,A}} {k \leq \ell-1}  \Big|\frac d {dz } \Big(L_k^{(d-1)} \Big(\frac {|Y |^2|\tau|}  { z } \Big){\rm e}^{-\frac {|Y |^2|\tau|}  { 2z }}\Big)\Big|   &\leq C(\ell, a,A)     \, .
\end{aligned} $$
We deduce that  for all $(Y,s)$ in~$\H^d$,~$z\in D^\ell _{| s|, a,A }$   and $|\tau| \leq \tau_0$,    there holds \begin{equation}\label{sumon01} \biggl|\sum_{k\leq \ell-1}  \big(4 |\tau|\big)^d {\rm e}^{- 2 |\tau| (2k+d) }
L_k^{(d-1)} \Big(\frac {|Y |^2|\tau|}  { z } \Big) {\rm e}^{  i \frac{\tau s}{2z}   -  \frac {|Y |^2|\tau|}  {2 z } }  \biggr|  \leq C(\ell, a,A) \,\tau^d_0 \, {\rm e}^{ \frac {\tau_0 \kappa (2\ell+d)}  {2 } }\end{equation}
and 
\begin{equation}\label{sumon02} 
\begin{aligned} &   \biggl|\partial_z  \biggl( \sum_{k\leq \ell-1}  \big(4 |\tau|\big)^d {\rm e}^{- 2 |\tau| (2k+d) }
L_k^{(d-1)} \Big(\frac {|Y |^2|\tau|}  { z } \Big) {\rm e}^{  i \frac{\tau s}{2z}   -  \frac {|Y |^2|\tau|}  {2 z } }  \biggr)\biggr| \\ &\qquad\qquad\qquad\qquad    \leq C(\ell, a,A) {\rm e}^{ \frac {\tau_0 \kappa (2\ell+d)}  {2 } }\Big( 1+ \frac {\tau_0\kappa (2\ell+d)}  { 2  } \Big)\,  \cdotp
 \end{aligned} \end{equation}
This obviously implies that the function $$\frac {1}  {(4\pi z)^{\frac Q 2} } \sum_{k\leq \ell-1}  \int_{|\tau| \leq \tau_0 } (4 |\tau|)^d {\rm e}^{ i \frac{\tau s}{2z}   }    {\rm e}^{- 2 |\tau| (2k+d) }
L_k^{(d-1)} \Big(\frac {|Y |^2|\tau|}  { z } \Big) {\rm e}^{   -  \frac {|Y |^2|\tau|}  {2 z } }d\tau $$
is holomorphic on the domain~$\ds D^\ell _{| s|, a,A }$.  The part
$$\frac {1}  {(4\pi z)^{\frac Q 2} } \int_{|\tau| \leq \tau_0 } \biggl(\frac {2 \tau}  {\sinh 2 \tau}\biggr)^{d} \, {\rm e}^{ i \frac{\tau s}{2z}   }   \, {\rm e}^{   -  \frac {|Y |^2 \tau}  {2z \tanh  2  \tau}} $$ can   easily be dealt with, which  achieves the proof of the fact that the function~$H^{(\ell), 2}_{z}$    is holomorphic on~$\ds \wt D^ \ell_{| s|}$.

\medskip
Finally,  let~$(z_p)_{p \in \N}$ be a sequence  in  $\widetilde D^ \ell _{| s|}$   which converges to~$-it$, with~$t \in \R^*$.  Then arguing as in the proof of Theorem~\ref{STth}, one can readily gather that for  any function~$\varphi$ in~$\cS(\H^d) \cap \mathscr{V}_\ell(\H^d)$, we have  
$$
  \begin{aligned} 
  \lim_{p \to \infty} \langle H^{(\ell), 1}_{z_p}, \!\varphi \rangle_{\cS'(\H^d)\times \cS(\H^d)} &= \langle H^{(\ell), 1}_{-it}, \!\varphi \rangle_{\cS'(\H^d)\times \cS(\H^d)} \\ 
  &  = \langle S^{(\ell )}_t, \!\varphi \rangle_{\cS'(\H^d)\times \cS(\H^d)}= \langle S_t, \!\varphi \rangle_{\cS'(\H^d)\times \cS(\H^d)}\, .
\end{aligned}
$$
To show that   $$
  \lim_{p \to \infty} \langle H^{(\ell), 2}_{z_p}, \!\varphi \rangle_{\cS'(\H^d)\times \cS(\H^d)} = \langle H^{(\ell), 2}_{-it}, \!\varphi \rangle_{\cS'(\H^d)\times \cS(\H^d)} \,,
$$  we shall as above 
investigate separately~$H^{(\ell), 2}_{z,\tau_0}$ and~$H^{(\ell), 2}_{z}-H^{(\ell), 2}_{z,\tau_0}$. 
Let~$(z_p)_{p \in \N}$ be  a sequence in $\widetilde D^ \ell _{| s|}$   which converges to~$-it$, with~$t \in \R^*$, and let us start by  studying the part corresponding to~$H^{(\ell), 2}_{z,\tau_0}$. One can assume without loss of generality that  $\ds  \frac  {| s|}  {|z_p | } \leq \kappa (2\ell+d)$, with $0<\kappa<4$, and also  that~$\ds (1- \delta)  |t| \leq |z_p| \leq (1+ \delta) |t| $  for some small~$\delta$.  Then  taking advantage of Estimate\refeq{newfGintbis}, we readily gather  that
$$ \begin{aligned} &  \big|\varphi (Y,s) \big| \,  \Big|\big(4 |\tau|\big)^d {\rm e}^{- 2 |\tau| d } {\rm e}^{i \frac{\tau s}{2z_p}} \,   \Phi_\ell \Big(\frac {|Y |^2|\tau|}  {  z_p }\virgp  \,{\rm e}^{- 4 |\tau|}\Big) \Big| \\ &\qquad \qquad \qquad \qquad  \leq C(\ell,\delta, \tau_0)  \big|\varphi (Y,s) \big|    \Big(1+ \frac {|Y |^2}  { (1-\delta)  |t| } \Big)^\ell\ |\tau|^{d+ \ell}{\rm e}^{\frac {(\kappa-4) (2\ell+d) |\tau|  }  {2  } }\, \virgp\end{aligned}$$ which implies that   $$
  \lim_{p \to \infty} \langle H^{(\ell), 2}_{z_p,\tau_0}, \!\varphi \rangle_{\cS'(\H^d)\times \cS(\H^d)} = \langle H^{(\ell), 2}_{-it,\tau_0}, \!\varphi \rangle_{\cS'(\H^d)\times \cS(\H^d)} \,.
$$  Now to study  the part corresponding to~$H^{(\ell), 2}_{z}-H^{(\ell), 2}_{z,\tau_0}$,  let us first observe  that it stems from\refeq{estimatesmodulus} that $|\exp(-  \frac {|Y |^2|\tau|}  {2 z_p}| \leq 1$. Consequently,   there exists  a positive constant~$C(\ell,\delta, \tau_0)$ such that,  for all $(Y,s)$ in~$\H^d$  and $|\tau| \leq \tau_0$,    there holds
  $$\begin{aligned} &  \big|\varphi (Y,s) \big| \, \Big|\sum_{k\leq \ell-1}  \big(4 |\tau|\big)^d {\rm e}^{- 2 |\tau| (2k+d) }
L_k^{(d-1)} \Big(\frac {|Y |^2|\tau|}  { z_p } \Big) {\rm e}^{  i \frac{\tau s}{2z_p}   -  \frac {|Y |^2|\tau|}  {2 z_p } } \Big|  \\ & \qquad \qquad \qquad \qquad \leq C(\ell,\delta, \tau_0) \big|\varphi (Y,s) \big| \Big(1+ \frac {|Y |^2|\tau_0|}  { (1-\delta)  |t| } \Big)^\ell\, 
 {\rm e}^{\frac {\kappa (2\ell+d) \tau_0  }  {2  } } \, \cdotp\end{aligned}$$
 Since the first part can be easily dealt, we readily gather that   $$
  \lim_{p \to \infty} \langle H^{(\ell), 2}_{z_p}-H^{(\ell), 2}_{z_p,\tau_0}, \!\varphi \rangle_{\cS'(\H^d)\times \cS(\H^d)} = \langle H^{(\ell), 2}_{-it}-H^{(\ell), 2}_{-it,\tau_0}, \!\varphi \rangle_{\cS'(\H^d)\times \cS(\H^d)} \,.
$$
 This ends   the proof of  Theorem\refer{better}.
 \qed


\begin{thebibliography}{50}

 \bibitem{askey} R. Askey and   and S. Waigner, Mean convergence of expansions in Laguerre and Hermite series,  {\it American  Journal of  Mathematics},  {\bf  87}, pages 695-708,  1965. 

\bibitem{astengo2}
F. Astengo, B.  Di Blasio and F. Ricci: Fourier transform of Schwartz functions on the Heisenberg group,
 {\it Studia Mathematica}, {\bf  214}, 2013, pages 201--222. 
 
\bibitem{BCD1}
H.~Bahouri,  J.-Y. Chemin and R. Danchin,  Fourier analysis and
applications to nonlinear partial differential equations, {\it
Grundlehren der Mathematischen Wisserchaften, Springer Verlag}, {\bf 343}, 2011.

\bibitem{bcdh} H. Bahouri, J.-Y. Chemin and R. Danchin,  A frequency  space  for the Heisenberg group,     {\it Annales de l'Institut de Fourier}, {\bf  69}, pages 365-407, 2019. 

\bibitem{bcdh2} H. Bahouri, J.-Y. Chemin and R. Danchin,  
Fourier transform of tempered distributions on the Heisenberg group,   {\it Annales Henri Lebesgue}, {\bf  1}, pages 1-45, 2018. 



\bibitem{bdg} H. Bahouri,   D. Barilari, and I. Gallagher, Strichartz estimates and    Fourier Restriction Theorems on the Heisenberg group, {\it arXiv:1911.03729}.

 \bibitem{bfgpseudo} H. Bahouri, C. Fermanian-Kammerer and I. Gallagher,  {\it Phase-space analysis and pseudo-differential calculus on the Heisenberg group,} Ast\'erisque, {\bf  340},  2012.


\bibitem{bfg}
H. Bahouri, C. Fermanian-Kammerer and  I. Gallagher,  Dispersive estimates for the Schr\"odinger operator on step 2 stratified Lie groups,  {\it Analysis and PDE}, {\bf  9}, pages 545-574,  2016.

\bibitem{bgx}
H. Bahouri, P. G\'erard, and C.-J. Xu,  Espaces de Besov et estimations de Strichartz g\'en\'eralis\'ees sur le groupe de Heisenberg, 
{\it Journal d'Analyse Math\'ematique}, {\bf 82},    pages 93-118, 2000.

\bibitem{beals}
R. Beals  and R. Wong,  Special functions and orthogonal polynomials, 
{\it Cambridge Studies in Advanced Mathematics}, {\bf 153},   2016.

\bibitem{corwingreenleaf} L.-J. Corwin and F.-P. Greenleaf, Representations of nilpotent Lie groups and their applications, Part 1: Basic theory and examples,  {\it Cambridge studies in advanced Mathematics},  {\bf 18}, Cambridge university Press, 1990.
 \bibitem{hiero}  M. Del Hierro,
Dispersive and Strichartz estimates on H-type groups, {\it Studia Math}, {\bf
169},  pages 1-20, 2005.

\bibitem{Tricomi} A. Erd\'elyi, W. Magnus, F. Oberhettinger, and F.-G. Tricomi,
 Higher Transcendental functions,  {\it  New York, McGraw-Hill}, {\bf 2},  1953.
 
 \bibitem{farautharzallah} J. Faraut and K. Harzallah: {\em Deux cours d'analyse harmonique},
 \'Ecole d'\'Et\'e d'analyse harmonique de Tunis.
Progress in Mathematics, Birkh\"auser, 1984.

 \bibitem{FL} C. Fermanian-Kammerer and C. Letrouit, Observability and controllability for the Schr\"odinger equation on quotients of groups of Heisenberg Type, {\it arXiv:2009.13877}.
 
 \bibitem{FF} C. Fermanian-Kammerer and V. Fischer, Defect measures on graded lie groups, to appear in {\it Annali della Scuola Normale Superiore di Pisa}.
 
 \bibitem{Feynman} R.-P. Feynman and A.-R. Hibbs, Quantum mechanics and path integrals, {\em International Series in Pure and Applied Physics, Maidenhead, Berksh: McGraw-Hill Publishing Company}, 1965.
 


\bibitem{fisher}  V. Fischer and M.-V. Ruzhansky,   {\it A pseudo-differential calculus on graded nilpotent Lie groups}, Fourier analysis, Trends Math Birkh\"auser,  pages 107-132, 2014.

\bibitem{folland}  G. B. Folland,    {\it  Harmonic Analysis in Phase Space},      Annals of Mathematics Studies, Princeton University Press, 1989.


\bibitem{B. Gaveau}
 B. Gaveau: Principe de moindre action, propagation de la chaleur
et estim\'ees sous-elliptiques sur certains groupes nilpotents,
{\em Acta Mathematica}, {\bf 139},  pages 95--153, 1977.

\bibitem{geller} D. Geller: Fourier analysis on the Heisenberg groups, {\it
Proceedings of the National Academy of Sciences of the United States of America}, {\bf 74}, pages 1328-1331, 1977.

\bibitem{geller2} D. Geller: Fourier analysis on the Heisenberg group
I,  the Schwartz space, {\it Journal of Functional Analysis}, {\bf  36},  pages 205-254, 1980.


\bibitem{ginibrevelo} J. Ginibre and G. Velo, Generalized Strichartz inequalities for the wave equations, {\it  Journal of  Functional Analysis}, {\bf  133}, pages  50-68,    1995.

\bibitem{gradshteyn} I.-S. Gradshteyn and I.-M. Ryzhik, Table of integrals, series and products,
{\em Alan Jeffrey and Daniel Zwillinger editors}, Elsevier, 2007.


\bibitem{keeltao} M. Keel and T. Tao,  Endpoint Strichartz estimates, {\it American Journal of  Mathematics},  {\bf 120} (1998), pages
   955-980.
   
   
   \bibitem{lebedev}
N. Lebedev, \textit{Special functions and their applications}, Prentice Hall, 1965.


  \bibitem{Muller}  D. M\"uller,  A restriction theorem for the Heisenberg group, {\it Annales of Mathematics},  {\bf  131}, pages 567-587,  1990.
 


\bibitem{O}
F.~W.~J. Olver, \textit{Asymptotics and special functions},
Academic Press, 1974.


\bibitem{stein2} E.M. Stein,
{\em  Harnomic Analysis}, Princeton University Press, 1993.

\bibitem{strichartz}
R. Strichartz, Restriction Fourier transform of quadratic surfaces
and decay of solutions of the wave equations, {\it Duke
Mathematical Journal}, {\bf 44}  (1977), pages 705-714.


\bibitem{tao} T. Tao,  Nonlinear dispersive equations. Local and global analysis,  {\it CBMS Regional Conference Series in Mathematics, American Mathematical Society, Providence, RI}, {\bf 106} (2006).


\bibitem{taylor1}
M. E. Taylor,  {\em Noncommutative Harmonic Analysis, Mathematical
survey and monographs}, {\bf 22}, American Mathematical Society Providence
RI, 1986.

\bibitem{thangavelu} S. Thangavelu,   {\it Harmonic Analysis on the Heisenberg group},  Progress in Mathematics, Birkha\"user, {\bf 159}, 1998.


 

\end{thebibliography}
\end{document}